\documentclass[12pt]{smfart}

\usepackage{smfthm}
\usepackage{amssymb}
\usepackage[francais]{babel}

\setlength{\textwidth}{15.92cm}
\setlength{\textheight}{23cm}
\setlength{\oddsidemargin}{0pt}
\setlength{\evensidemargin}{0pt}
\setlength{\topmargin}{0pt}
\setlength{\marginparsep}{0pt}
\setlength{\marginparwidth}{1in}

\newcommand{\Qp}{\mathbf{Q}_p}
\newcommand{\Fp}{\mathbf{F}_p}
\newcommand{\Zp}{\mathbf{Z}_p}
\newcommand{\Qq}{\mathbf{Q}_q}
\newcommand{\Fq}{\mathbf{F}_q}
\newcommand{\Zq}{\mathbf{Z}_q}
\newcommand{\ZZ}{\mathbf{Z}}
\newcommand{\QQ}{\mathbf{Q}}
\newcommand{\OO}{\mathcal{O}}
\newcommand{\MM}{\mathfrak{M}}
\newcommand{\Qpbar}{\overline{\mathbf{Q}}_p}

\newcommand{\Cp}{\mathbf{C}_p}
\newcommand{\val}{\operatorname{val}}
\renewcommand{\phi}{\varphi}

\renewcommand{\geq}{\geqslant}
\renewcommand{\leq}{\leqslant} 
\newcommand{\Gal}{\operatorname{Gal}}

\renewcommand{\tilde}{\widetilde}
\newcommand{\vp}{\val_p}
\newcommand{\ve}{\val_{\mathrm{E}}}
\newcommand{\eps}{\varepsilon}
\newcommand{\bcris}{\mathbf{B}_{\mathrm{cris}}} 
\newcommand{\be}{\mathbf{B}_{\mathrm{e}}} 
\newcommand{\bmax}{\mathbf{B}_{\mathrm{max}}} 
\newcommand{\brig}[2]{\mathbf{B}^{\dagger #1}_{\mathrm{rig} #2}}
\newcommand{\bdr}{\mathbf{B}_{\mathrm{dR}}}  
\newcommand{\atplus}{\widetilde{\mathbf{A}}^+}
\newcommand{\btplus}{\widetilde{\mathbf{B}}^+}
\newcommand{\et}{\widetilde{\mathbf{E}}}
\newcommand{\etplus}{\widetilde{\mathbf{E}}^+}
\newcommand{\dsen}{\mathrm{D}_{\mathrm{Sen}}}

\newcommand{\dcroc}[1]{[\![ #1 ]\!]}
\newcommand{\GL}{\mathrm{GL}}
\newcommand{\rhob}{\overline{\rho}}

\author{Laurent Berger}
\address{Universit\'e de Lyon \\
UMPA, ENS de Lyon \\
46 all\'ee d'Italie \\
69007 Lyon \\
France}
\email{laurent.berger@ens-lyon.fr}
\urladdr{www.umpa.ens-lyon.fr/\~{}lberger/}

\author{Ga\"etan Chenevier}
\address{CMLS \\
Ecole Polytechnique \\
91128 Palaiseau Cedex  \\
France}
\email{chenevier@math.polytechnique.fr}
\urladdr{www.math.polytechnique.fr/\~{}chenevier/}

\date{D\'ecembre 2009}

\title{Repr\'esentations potentiellement triangulines de dimension $2$}

\subjclass{11F33; 11F80; 11F85; 11S15; 11S20; 11S25; 14D15; 14F30; 14G22}

\keywords{repr\'esentations triangulines; $B$-paires; th\'eorie de Hodge $p$-adique; espaces analytiques; d\'eformations universelles}

\thanks{G.\ Chenevier est financ\'e par le C.N.R.S.}

\begin{document}

\begin{abstract}
Les deux r\'esultats principaux de cette note sont d'une part que si $V$ est une repr\'esentation de $G_{\Qp}$ de dimension $2$ qui est potentiellement trianguline, alors $V$ v\'erifie au moins une des propri\'et\'es suivantes (1) $V$ est trianguline d\'eploy\'ee (2) $V$ est une somme de caract\`eres ou une induite (3) $V$ est une repr\'esentation de de Rham tordue par un caract\`ere, et d'autre part qu'il existe des repr\'esentations de $G_{\Qp}$ de dimension $2$ qui ne sont pas potentiellement triangulines.
\end{abstract}

\begin{altabstract}
The two main results of this note are on the one hand that if $V$ is a $2$-dimensional potentially trianguline representation of $G_{\Qp}$ then $V$ satisfies at least one of the following properties (1) $V$ is split trianguline (2) $V$ is a direct sum of characters or an induced representation (3) $V$ is a twist of a de Rham representation, and on the other hand that there exists some $2$-dimensional representations of $G_{\Qp}$ which are not potentially trianguline.
\end{altabstract}

\maketitle

\tableofcontents

\setlength{\baselineskip}{18pt}

\section*{Introduction}

Dans le cadre de la correspondance de Langlands $p$-adique, Colmez a introduit dans \cite{CTR} la notion de repr\'esentation $p$-adique trianguline et a d\'emontr\'e plusieurs propri\'et\'es importantes de ces objets. Ses constructions ont \'et\'e reprises et g\'en\'eralis\'ees par Nakamura dans \cite{KN}. La d\'efinition de Colmez peut se faire en termes des {\og $(\phi,\Gamma)$-modules sur l'anneau de Robba \fg} de Fontaine et Kedlaya ou bien en termes de {\og $B$-paires \fg}. Dans cette note qui est un compl\'ement \`a \cite{CTR} et \cite{KN}, nous avons choisi de travailler avec les $B$-paires, qui sont plus commodes par certains aspects. A partir de maintenant, $K$ est une extension finie de $\Qp$ et $G_K = \Gal(\Qpbar/K)$. 

Les $B$-paires sont des objets introduits dans \cite{LB8} qui g\'en\'eralisent les repr\'esentations $p$-adiques, et la cat\'egorie des $B$-paires est alors \'equivalente \`a celle des $(\phi,\Gamma)$-modules sur l'anneau de Robba. Pour d\'efinir les $B$-paires, on utilise les anneaux $\bdr^+$, $\bdr$ et $\be=\bcris^{\phi=1}$ de Fontaine. Notons que ces trois anneaux sont filtr\'es et que leurs gradu\'es respectifs sont $\Cp[t]$, $\Cp[t,t^{-1}]$ et $\{ P \in \Cp[t^{-1}]$ tels que $P(0) \in \Qp\}$. 

Si $D$ est un $\phi$-module filtr\'e qui provient de la cohomologie d'un sch\'ema $X$ propre et lisse sur $\OO_K$, alors le $\phi$-module sous-jacent ne d\'epend que de la fibre sp\'eciale de $X$ (c'en est la cohomologie cristalline) alors que la filtration ne d\'epend que de la fibre g\'en\'erique (c'est la filtration de Hodge de la cohomologie de de Rham, dans laquelle se plonge la cohomologie cristalline). Si $V=V_{\mathrm{cris}}(D)$, alors on voit que $\be \otimes_{\Qp} V = (\bcris \otimes_{K_0} D)^{\phi=1}$ ne d\'epend que de la structure de $\phi$-module de $D$ et de plus, les $\phi$-modules $D_1$ et $D_2$ sont isomorphes si et seulement si les $\be$-repr\'esentations $\be \otimes_{\Qp} V_1$ et $\be \otimes_{\Qp} V_2$ le sont. Par ailleurs, $\bdr^+ \otimes_{\Qp} V = \mathrm{Fil}^0 (\bdr \otimes_{K_0} D)$ et les modules filtr\'es $K \otimes_{K_0} D_1$ et $K \otimes_{K_0} D_2$ sont isomorphes si et seulement si les $\bdr^+$-repr\'esentations  
$\bdr^+ \otimes_{\Qp} V_1$ et $\bdr^+ \otimes_{\Qp} V_2$ le sont. 

L'id\'ee sous-jacente \`a la construction des $B$-paires est d'isoler les ph\'enom\`enes li\'es \`a la {\og fibre sp\'eciale \fg} et \`a la {\og fibre g\'en\'erique \fg} en consid\'erant non pas des repr\'esentations $p$-adiques $V$, mais des {\og $B$-paires \fg} $W=(W_e,W_{dR}^+)$ o\`u $W_e$ est un $\be$-module libre muni d'une action semi-lin\'eaire et continue de $G_K$ et o\`u $W_{dR}^+$ est un $\bdr^+$-r\'eseau de $W_{dR} = \bdr \otimes_{\be} W_e$ stable par $G_K$. Si $V$ est une repr\'esentation $p$-adique, alors $W(V)= (\be \otimes_{\Qp} V, \bdr^+ \otimes_{\Qp} V)$ est une $B$-paire et cette construction permet de plonger la cat\'egorie des repr\'esentations $p$-adiques dans celle strictement plus grande des $B$-paires.

La d\'efinition de Colmez est alors la suivante : si $V$ est une repr\'esentation $E$-lin\'eaire de $G_K$ (ici $E$ est une extension finie de $\Qp$ qui est un corps de coefficients pour les repr\'esentations que l'on consid\`ere) alors on peut lui associer comme ci-dessus une $B$-paire $E$-lin\'eaire $W(V)$, et on dit que $V$ est trianguline d\'eploy\'ee si $W(V)$ est une extension successive de $B$-paires de rang $1$. On dit que $V$ est trianguline s'il existe une extension finie $F$ de $E$ telle que $F \otimes_E V$ est trianguline d\'eploy\'ee (notons cependant que les {\og triangulines \fg} de Colmez correspondent \`a nos {\og triangulines d\'eploy\'ees \fg}). On dit que $V$ est potentiellement trianguline s'il existe une extension finie $L$ de $K$ telle que $V \vert_{G_L}$ est trianguline. Le premier r\'esultat de cette note (le th\'eor\`eme \ref{main}) est le suivant.

\begin{enonce*}{Th\'eor\`eme A}
Si $V$ est une repr\'esentation $E$-lin\'eaire de $G_{\Qp}$ de dimension $2$ qui est potentiellement trianguline, alors $V$ v\'erifie au moins une des propri\'et\'es suivantes :
\begin{enumerate}
\item $V$ est trianguline d\'eploy\'ee;
\item $V$ est une somme de caract\`eres ou une induite;
\item $V$ est une repr\'esentation de de Rham tordue par un caract\`ere.
\end{enumerate}
\end{enonce*}

La d\'emonstration du th\'eor\`eme A se fait en utilisant de la descente galoisienne. Les $B$-paires ont des pentes (les pentes de frobenius des $(\phi,\Gamma)$-modules correspondants) et des poids (qui g\'en\'eralisent les poids de Hodge-Tate des repr\'esentations $p$-adiques). La combinatoire des pentes et des poids est assez rigide et si $V$ est une repr\'esentation potentiellement trianguline, soit sa triangulation descend et on est dans le cas (1), soit il y a des sym\'etries suppl\'ementaires suffisantes pour montrer qu'on est dans le cas (2) ou (3). 

Les conditions (1), (2) et (3) du th\'eor\`eme A ne sont pas du tout mutuellement exclusives, et en fait pour tout $S \subset \{ 1, 2, 3\}$ non vide, il existe une repr\'esentation $V$ qui v\'erifie exactement $S$. Le cas $S=\emptyset$ revient \`a la construction d'une repr\'esentation $p$-adique qui n'est pas potentiellement trianguline, et dans la suite de cet article, nous montrons que de telles repr\'esentations existent.

\begin{enonce*}{Th\'eor\`eme B}
Il existe des repr\'esentations $p$-adiques de dimension $2$
de $G_{\Qp}$ qui ne sont pas potentiellement triangulines.
\end{enonce*}

Nous prouvons en fait un r\'esultat plus fort, dont voici une cons\'equence.

\begin{enonce*}{Th\'eor\`eme C}
Soit $R : G_{\Qp} \to \GL_2(E)$ une repr\'esentation r\'esiduellement 
absolument irr\'eductible. Il existe une extension finie $F/E$ et une 
representation continue 
\[ \rho : G_{\Qp} \to \GL_2(F \langle t \rangle) \]
telle que $\rho_0 \otimes_E F=R$ et telle que pour tout 
$t \in \overline{\ZZ}_p$, sauf peut-\^etre pour un ensemble 
d\'enombrable d'entre eux, $\rho_t$ n'est pas potentiellement trianguline.
\end{enonce*}

Ce r\'esultat entra\^{\i}ne \'evidemment le pr\'ec\'edent : 
consid\'erer par exemple la repr\'esentation sur le module de 
Tate d'une courbe elliptique sur $\Qp$ ayant bonne r\'eduction 
supersinguli\`ere, ou encore une repr\'esentation induite convenable. 
La repr\'esentation $\rho$ que l'on construit est de polyn\^ome de Sen 
et d\'eterminant constants. Dans de nombreux cas, notamment dans 
l'exemple pr\'ec\'edent, on peut prendre $F=E$ dans l'\'enonc\'e ci-dessus.
  
Remarquons pour terminer que la d\'emonstration du th\'eor\`eme B n'est pas 
constructive, et le lecteur pourra chercher avec profit \`a construire 
explicitement une repr\'esentation qui n'est pas potentiellement trianguline. 

\section{La cat\'egorie des $B$-paires}
\label{defs}

Nous commen\c{c}ons par faire des rappels tr\`es succints sur les d\'efinitions (donn\'ees dans \cite{FPP} par exemple) des divers anneaux que nous utilisons dans cette note. Rappelons que $\etplus = \varprojlim_{x \mapsto x^p} \OO_{\Cp}$ est un anneau de caract\'eristique $p$, complet pour la valuation $\ve$ d\'efinie par $\ve(x)= \vp(x^{(0)})$ et qui contient un \'el\'ement $\eps$ tel que $\eps^{(n)}$ est une racine primitive $p^n$-i\`eme de l'unit\'e. On fixe un tel $\eps$ dans toute cette note. L'anneau $\et = \etplus[1/(\eps-1)]$ est alors un corps qui contient comme sous-corps dense la cl\^oture alg\'ebrique de $\Fp(\!(\eps-1)\!)$. On pose $\atplus=W(\etplus)$ et $\btplus=\atplus[1/p]$. L'application $\theta : \btplus \to \Cp$ qui \`a $\sum_{k \gg -\infty} p^k [x_k]$ associe $\sum_{k \gg -\infty} p^k x_k^{(0)}$ est un morphisme d'anneaux surjectif et $\bdr^+$ est le compl\'et\'e de $\btplus$ pour la topologie $\ker(\theta)$-adique, ce qui en fait un espace topologique de Fr\'echet. On pose $X=[\eps]-1 \in \atplus$ et $t=\log(1+X) \in \bdr^+$ et on d\'efinit $\bdr$ par $\bdr=\bdr^+[1/t]$. Soit $\tilde{p} \in \etplus$ un \'el\'ement tel que $\tilde{p}^{(0)}=p$. L'anneau $\bmax^+$ est l'ensemble des \'el\'ements de $\bdr^+$ qui peuvent s'\'ecrire sous la forme $\sum_{n \geq 0} b_n ([\tilde{p}]/p)^n$ o\`u $b_n \in \btplus$ et $b_n \to 0$ quand $n \to \infty$ ce qui en fait un sous-anneau de $\bdr^+$ muni en plus d'un frobenius $\phi$ qui est injectif, mais pas surjectif. On pose $\bmax=\bmax^+[1/t]$ et $\be=\bmax^{\phi=1}$. Rappelons que les anneaux $\bmax$ et $\bdr$ sont reli\'es, en plus de l'inclusion $\bmax \subset \bdr$, par la suite exacte fondamentale : $0 \to \Qp \to \be \to \bdr/\bdr^+ \to 0$. 

L'anneau $\bdr^+$ contient $\Qpbar$ ce qui fait que si $E$ est une extension finie de $\Qp$, alors $E \otimes \bdr^+ \simeq (\bdr^+)^{[E:\Qp]}$ (dans toute cette note, on \'ecrit $E\otimes -$ plut\^ot que $E \otimes_{\Qp} -$ pour all\'eger les notations). En ce qui concerne $E \otimes \be$, on a le r\'esultat suivant (rappelons qu'un anneau de B\'ezout est un anneau int\`egre tel que tout id\'eal de type fini est principal).

\begin{prop}\label{bebez}
Si $E$ est une extension finie de $\Qp$, alors l'anneau $E \otimes \be$ est un anneau de B\'ezout.
\end{prop}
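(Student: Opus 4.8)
The plan is to deduce the statement from the known fact that $\be$ itself (the case $E=\Qp$) is a B\'ezout domain, and to transfer this property along the finite extension. The ring $A:=E\otimes\be$ is the base change to $\be$ of the finite \'etale $\Qp$-algebra $E$, so it is a finite \'etale $\be$-algebra; the point is that, with the two ingredients recalled just above (namely $E\otimes\bdr^+\simeq(\bdr^+)^{d}$ with $d=[E:\Qp]$, and the fundamental exact sequence, which stays exact after the flat base change $E\otimes-$), the proof for $\be$ can be run with $E$-coefficients. I would first check that $A$ is a domain, and then that its finitely generated ideals are principal.

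For the domain property, tensoring the fundamental exact sequence with $E$ and taking $G_L$-invariants for a finite extension $L/\Qp$ shows, via $(\bdr/\bdr^+)^{G_L}=0$, that $\be^{G_L}=\Qp$ for every such $L$. Hence any $x\in\be$ algebraic over $\Qp$ has a finite $G_{\Qp}$-orbit (its conjugates are roots of its minimal polynomial), so is fixed by some $G_L$ and therefore lies in $\Qp$; in other words $\Qp$ is algebraically closed in $\be$. Since B\'ezout domains are integrally closed, Gauss's lemma then shows that the minimal polynomial $P$ of a primitive element of $E/\Qp$ stays irreducible over $\be$: a monic factorisation over $\operatorname{Frac}(\be)$ would descend to $\be[X]$, and its factors, having coefficients in $\be$ and algebraic over $\Qp$, would lie in $\Qp[X]$, contradicting the irreducibility of $P$. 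Thus $A=\be[X]/(P)$ is a domain.

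For the B\'ezout property I would argue structurally. Being \'etale over the normal domain $\be$ and itself a domain, $A$ is normal; being moreover integral over the Pr\"ufer domain $\be$ (a B\'ezout domain is Pr\"ufer) with fraction field the finite extension $E\otimes\operatorname{Frac}(\be)$, it is the integral closure of $\be$ there, hence again Pr\"ufer. In a Pr\"ufer domain every finitely generated ideal is invertible, so it is principal if and only if its class in $\operatorname{Pic}(A)$ is trivial; thus $A$ is B\'ezout if and only if $\operatorname{Pic}(A)=0$.

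The crux, and the step I expect to be the main obstacle, is therefore the vanishing of $\operatorname{Pic}(A)$. I would obtain it by adapting the argument that gives $\operatorname{Pic}(\be)=0$: to a nonzero $f\in A$ one attaches a finite effective divisor of zeros, read off from the components of $E\otimes\bdr\simeq\bdr^{d}$ together with $\theta$ and its Galois twists; the finitely generated ideal $(f,g)$ then corresponds to the minimum of the divisors of $f$ and $g$, and everything reduces to realising a prescribed effective divisor by a single element of $A$. This realisation is carried out as for $E=\Qp$, the only inputs being the $E$-tensored fundamental exact sequence and the structure of $E\otimes(\bdr/\bdr^+)\simeq(\bdr/\bdr^+)^{d}$, which are no harder to handle than over $\Qp$; controlling this construction over the non-field base $E\otimes-$ is where the real work lies.
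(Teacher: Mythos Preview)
The paper does not actually prove this proposition: its proof consists solely of citing proposition 1.1.9 of \cite{LB8} for the case $E=\Qp$ and lemma 1.6 of \cite{KN} for the general case. So there is no argument in the paper to compare with, only references.

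Your outline is sound and is essentially the route taken in those references. Your proof that $E\otimes\be$ is a domain is correct and clean; in fact you do not need Gauss's lemma at all, since once you know $\Qp$ is algebraically closed in $\be$ (which your Galois argument establishes), any monic factor of $P$ over $\be$ has coefficients that are symmetric functions of roots of $P$, hence algebraic over $\Qp$ and in $\be$, hence in $\Qp$. The reduction ``Pr\"ufer $+$ $\operatorname{Pic}=0$ $\Rightarrow$ B\'ezout'' is valid, and your identification of the Picard vanishing as the real content is exactly right. In practice one bypasses the abstract Pr\"ufer detour and proves directly that any two elements of $E\otimes\be$ have a greatest common divisor, via the divisor formalism you sketch: using $E\otimes\bdr^+\simeq(\bdr^+)^d$ one reads off a $d$-tuple of $t$-adic orders and constructs an element realising the componentwise minimum from the $E$-linear fundamental exact sequence. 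That construction is no harder than for $E=\Qp$, and is what lemma 1.6 of \cite{KN} carries out; your proposal correctly locates where the work is but stops short of writing it down.
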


\begin{proof}
Si $E=\Qp$, alors c'est la proposition 1.1.9 de \cite{LB8} et le cas g\'en\'eral fait l'objet du lemme 1.6 de \cite{KN}.
\end{proof}

Tous les anneaux construits ci-dessus admettent une action naturelle de $G_{\Qp}$ et donc de $G_K$ si $K \subset\Qpbar$. On fait agir $G_{\Qp}$ trivialement sur $E$ de sorte que $E \otimes \be$ et $E \otimes \bdr$ sont munis d'une action $E$-lin\'eaire de $G_{\Qp}$. 

Une $E \otimes \be$-repr\'esentation de $G_K$ est un $E \otimes \be$-module libre de rang fini muni d'une action semi-lin\'eaire de $G_K$. Si $W_e$ est une $E \otimes \be$-repr\'esentation de $G_K$, alors on pose $W_{dR} = (E \otimes \bdr) \otimes_{E \otimes \be} W_e$.

\begin{defi}\label{defbp}
Une $B^{\otimes E}_{\vert K}$-paire est une paire $W=(W_e,W_{dR}^+)$ o\`u $W_e$ est une $E \otimes \be$-repr\'esentation de $G_K$ et o\`u $W_{dR}^+$ est un $E \otimes \bdr^+$-r\'eseau de $W_{dR}$ stable par $G_K$.
\end{defi}

Si $E=\Qp$, alors on retrouve la d\'efinition du \S 2 de \cite{LB8} et dans le cas g\'en\'eral, on retrouve les {\og $E$-$B$-paires de $G_K$ \fg} de \cite{KN}. Rappelons que l'on note $X \subset W$ si $X_e \subset W_e$ et $X_{dR}^+ \subset W_{dR}^+$ mais que l'on dit que $X$ est un sous-objet strict de $W$ seulement si en plus $X_{dR}^+$ est satur\'e dans $W_{dR}^+$. Dans ce cas, le quotient $W/X$ est aussi une $B^{\otimes E}_{\vert K}$-paire.

Si $W$ est une $B^{\otimes E}_{\vert K}$-paire, et si $F$ est une extension finie galoisienne de $E$ et $L$ est une extension finie de $K$, alors $F \otimes_E W \vert_{G_L}$ est une $B^{\otimes F}_{\vert L}$-paire, munie en plus d'une action de $\Gal(F/E)$ et d'une action de $G_K$ qui \'etend celle de $G_L$. On a alors le r\'esultat suivant de {\og descente galoisienne \fg}.

\begin{prop}\label{galdesc}
Si $W$ est une $B^{\otimes E}_{\vert K}$-paire et si $X \subset F \otimes_E W \vert_{G_L}$ est  stable sous les actions de $\Gal(F/E)$ et de $G_K$, alors $X^{\Gal(F/E)}$ avec l'action induite de $G_K$ est une $B^{\otimes E}_{\vert K}$-paire et $X = F \otimes_E X^{\Gal(F/E)} \vert_{G_L}$.
\end{prop}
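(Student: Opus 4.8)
The plan is to reduce everything to Galois descent for modules along the finite Galois extension $F/E$, applied separately to the $E \otimes \be$-component and the $E \otimes \bdr^+$-component of $X$, and then to check that the resulting invariants assemble into a $B$-pair. Write $\Gamma = \Gal(F/E)$. Since $F/E$ is finite Galois one has $F \otimes_E F \simeq \prod_{\sigma \in \Gamma} F$, and this persists after applying $- \otimes A$ for any $\Qp$-algebra $A$, so that $E \otimes A \to F \otimes A$ is a faithfully flat Galois covering with group $\Gamma$. The additive form of Hilbert 90 then asserts that for every $F \otimes A$-module $M$ equipped with a semilinear action of $\Gamma$ (with $\Gamma$ acting on $F \otimes A$ through the factor $F$), the invariants $M^\Gamma$ form an $E \otimes A$-module and the natural map $F \otimes_E M^\Gamma \to M$ is an isomorphism; moreover finite generation descends along $E \otimes A \to F \otimes A$.

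I would apply this with $A = \be$ to $M = X_e$ and with $A = \bdr^+$ to $M = X_{dR}^+$. The actions of $G_K$ and of $\Gamma$ on $F \otimes_E W$ commute, since $\Gamma$ operates on the factor $F$ while $G_K$ operates on everything else, so $Y_e := X_e^\Gamma$ and $Y_{dR}^+ := (X_{dR}^+)^\Gamma$ inherit semilinear $G_K$-actions and satisfy $F \otimes_E Y_e = X_e$ and $F \otimes_E Y_{dR}^+ = X_{dR}^+$. As $X_e$ is free of finite rank over $F \otimes \be$, descent makes $Y_e$ a finitely generated module over $E \otimes \be$, and it is torsion-free because it embeds in $X_e$; since $E \otimes \be$ is a Bézout domain by Proposition \ref{bebez}, $Y_e$ is therefore free, necessarily of the same rank. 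Likewise $E \otimes \bdr^+ \simeq (\bdr^+)^{[E:\Qp]}$ is a finite product of discrete valuation rings, over which finitely generated torsion-free modules are free, so $Y_{dR}^+$ is free over $E \otimes \bdr^+$.

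To conclude that $Y = (Y_e, Y_{dR}^+)$ is a $B^{\otimes E}_{\vert K}$-paire, I would note that base changing $(E \otimes \bdr) \otimes_{E \otimes \be} Y_e$ along $E \to F$ recovers $X_{dR}$, and since $F \otimes_E Y_{dR}^+ = X_{dR}^+$ is an $F \otimes \bdr^+$-lattice in $X_{dR}$ stable under $G_K$, faithful flatness forces $Y_{dR}^+$ to be an $E \otimes \bdr^+$-lattice in $(E \otimes \bdr) \otimes_{E \otimes \be} Y_e$ stable under $G_K$; this is precisely the condition defining a $B$-pair. Finally $F \otimes_E Y = (F \otimes_E Y_e, F \otimes_E Y_{dR}^+) = (X_e, X_{dR}^+) = X$, and restricting the $G_K$-structure to $G_L$ yields the asserted identity $X = F \otimes_E Y \vert_{G_L}$.

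The step I expect to be the main obstacle is the second paragraph: one must check that the invariants $X_e^\Gamma$ and $(X_{dR}^+)^\Gamma$ are genuinely free of the correct rank, and not merely finitely generated torsion-free modules. This forces one to invoke the ring-theoretic structure, namely the Bézout property of $E \otimes \be$ from Proposition \ref{bebez} and the product-of-DVRs description of $E \otimes \bdr^+$, and to take some care because these rings are not Noetherian, so the usual passage from finitely generated flat to projective is unavailable and one argues through torsion-freeness instead.
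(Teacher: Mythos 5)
Your overall strategy is essentially the paper's: descend the two components $X_e$ and $X_{dR}^+$ separately along the finite Galois extension $F/E$, then upgrade the descended modules to free modules using the ring structure ($E \otimes \be$ is a B\'ezout domain, $E \otimes \bdr^+$ a finite product of discrete valuation rings), and finally observe $G_K$-stability. The only difference of route is that the paper quotes a ready-made descent statement (proposition 2.2.1 de \cite{LB11}, avec $\otimes$ au lieu de $\widehat{\otimes}$), whereas you reprove it directly via classical Galois descent for the Galois extension of rings $E \otimes A \to F \otimes A$; that part of your argument is correct, as are the descent of finite generation and the torsion-freeness of $X_e^{\Gal(F/E)}$ (nonzero elements of $E \otimes \be$ stay nonzero in the domain $F \otimes \be$, hence act injectively on the free module $X_e$).

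There is, however, one genuine flaw, located exactly at the step you yourself flagged as the delicate one: the claim that over $E \otimes \bdr^+ \simeq (\bdr^+)^{[E:\Qp]}$ every finitely generated torsion-free module is free. This is false over a nontrivial product of discrete valuation rings: the module $\bdr^+ \times 0$ over $\bdr^+ \times \bdr^+$ is finitely generated and torsion-free (every non-zero-divisor of the product acts injectively on it), yet it is not free. What finite generation plus torsion-freeness actually give is only that $Y_{dR}^+ = (X_{dR}^+)^{\Gal(F/E)}$ decomposes as $\prod_\sigma M_\sigma$, indexed by the embeddings $\sigma : E \to \Qpbar$, with each $M_\sigma$ free of some rank $r_\sigma$ over the corresponding copy of $\bdr^+$; freeness over the product requires in addition that all the $r_\sigma$ coincide. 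This is precisely the point the paper's proof makes explicit (``$\bdr^+$ est principal \emph{et le rang est constant}''). The gap is fixable with data you already have: since $F \otimes_E Y_{dR}^+ \simeq X_{dR}^+$ is free over $F \otimes \bdr^+ \simeq \prod_\tau \bdr^+$, and the factor indexed by $\tau : F \to \Qpbar$ receives the factor of $E \otimes \bdr^+$ indexed by $\tau\vert_E$, the rank of $X_{dR}^+$ on the $\tau$-component equals $r_{\tau\vert_E}$; as $X_{dR}^+$ has constant rank and every $\sigma$ extends to some $\tau$, all the $r_\sigma$ are equal. With that one additional line your proof is complete and agrees with the paper's.
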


\begin{proof}
La proposition 2.2.1 de \cite{LB11} appliqu\'ee \`a $B=F$, $M=X$ et $S=E \otimes \be$ puis $S=E \otimes \bdr^+$ (si l'on remplace le produit tensoriel compl\'et\'e $\widehat{\otimes}$ par un produit tensoriel simple $\otimes$, ce qui ne change pas la d\'emonstration) implique que $X_e^{\Gal(F/E)}$ et $(X_{dR}^+)^{\Gal(F/E)}$ sont localement libres de rang fini sur $E \otimes \be$ et $E \otimes \bdr^+$ et v\'erifient $X = F \otimes_E X^{\Gal(F/E)}$. Ils sont libres de rang fini (par la proposition \ref{bebez} pour $X_e^{\Gal(F/E)}$ et car $\bdr^+$ est principal et le rang est constant pour $(X_{dR}^+)^{\Gal(F/E)}$) et comme $X^{\Gal(F/E)}$ est stable sous l'action induite de $G_K$, c'est bien une $B^{\otimes E}_{\vert K}$-paire.
\end{proof}

\section{Pentes et poids des $B$-paires}
\label{ppbp}

Rappelons que par le th\'eor\`eme A de \cite{LB8} (si $E=\Qp$) et par le th\'eor\`eme 1.36 de \cite{KN} en g\'en\'eral, on a une \'equivalence de cat\'egories entre la cat\'egorie des $B^{\otimes E}_{\vert K}$-paires et celle des $(\phi,\Gamma_K)$-modules sur l'anneau $E \otimes \brig{}{,K}$ o\`u $\brig{}{,K}$ est {\og l'anneau de Robba sur $K$ \fg}.  

On sait associer, par exemple selon la m\'ethode de \cite{KLMT}, des pentes aux $\phi$-modules sur l'anneau de Robba; en particulier, on dispose de la notion de $\phi$-module isocline de pente $s$ o\`u $s \in \QQ$ et on peut donc d\'efinir la notion de $B^{\otimes E}_{\vert K}$-paire isocline de pente $s$ via l'\'equivalence de cat\'egories entre $B^{\otimes E}_{\vert K}$-paires et $(\phi,\Gamma_K)$-modules sur l'anneau $E \otimes \brig{}{,K}$. On a alors le th\'eor\`eme suivant, qui suit de cette \'equivalence de cat\'egories et du th\'eor\`eme 6.10 de \cite{KLMT}, le th\'eor\`emeÊ de filtration par les pentes pour les $\phi$-modules sur l'anneau de Robba.

\begin{theo}\label{slopefil}
Si $W$ est une $B^{\otimes E}_{\vert K}$-paire, alors il existe une filtration canonique 
\[ 0 = W_0 \subset W_1 \subset \cdots \subset W_\ell = W \] 
par des sous-$B^{\otimes E}_{\vert K}$-paires, telle que :
\begin{enumerate}
\item pour tout $1 \leq i \leq \ell$, le quotient $W_i / W_{i-1}$ est isocline;
\item si l'on appelle $s_i$ la pente de $W_i / W_{i-1}$, alors $s_1 < s_2 < \cdots < s_\ell$.
\end{enumerate}
\end{theo}

Notons tout de m\^eme que $E \otimes \brig{}{,K}$ n'est pas n\'ecessairement int\`egre, et donc qu'il faut un petit argument suppl\'ementaire (on oublie $E$ puis on utilise le fait que la filtration est canonique pour le faire r\'eappara\^{\i}tre) pour obtenir le th\'eor\`eme ci-dessus, qui est alors le th\'eor\`eme 1.32 de \cite{KN}. Dans cette note, nous n'avons pas besoin de savoir comment calculer les pentes d'une $B$-paire. En plus du th\'eor\`eme \ref{slopefil}, nous n'utilisons que les deux r\'esultats ci-dessous.

\begin{prop}\label{etalrep}
Si $V$ est une repr\'esentation $E$-lin\'eaire de $G_K$ alors 
\[ W(V)=((E \otimes \be) \otimes_E V, (E \otimes \bdr^+) \otimes_E V) \] 
est une $B^{\otimes E}_{\vert K}$-paire, et le foncteur $V \mapsto W(V)$ donne une \'equivalence de cat\'egories entre la cat\'egorie des repr\'esentations $E$-lin\'eaires de $G_K$ et la cat\'egorie des $B^{\otimes E}_{\vert K}$-paires isoclines de pente nulle.
\end{prop}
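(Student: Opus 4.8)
The plan is to establish the statement in three steps: that $W(V)$ is a well-defined $B^{\otimes E}_{\vert K}$-paire which is isocline of slope $0$, that $V \mapsto W(V)$ is fully faithful, and that every slope-zero pair arises this way.

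First, since $V$ is a finite-dimensional $E$-vector space, $W_e = (E \otimes \be) \otimes_E V$ is free of rank $\dim_E V$ over $E \otimes \be$, carries the diagonal semilinear $G_K$-action, and is thus an $E \otimes \be$-repr\'esentation; moreover $W_{dR} = (E \otimes \bdr) \otimes_{E \otimes \be} W_e = (E \otimes \bdr) \otimes_E V$, in which $(E \otimes \bdr^+) \otimes_E V$ is plainly a $G_K$-stable $E \otimes \bdr^+$-r\'eseau. So $W(V)$ is a $B$-paire. That it is isocline of slope $0$ I would read off from the equivalence, recalled above, with $(\phi,\Gamma_K)$-modules over $E \otimes \brig{}{,K}$: the module attached to $W(V)$ is \'etale, and by Kedlaya's slope theory (th\'eor\`eme 6.10 de \cite{KLMT}) the \'etale $\phi$-modules are exactly the isocline ones of slope $0$.

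For full faithfulness I would compute internal Homs. As $W(-)$ respects tensor products and duals (a routine check on free modules), the internal Hom pair $\underline{\operatorname{Hom}}(W(V_1), W(V_2))$ equals $W(V_1^\vee \otimes_E V_2)$, so morphisms $W(V_1) \to W(V_2)$ are the global sections ($H^0$) of this pair. It therefore suffices to show $H^0(G_K, W(V)) = V^{G_K}$ for each $V$, where $H^0$ of a pair $(W_e, W_{dR}^+)$ is the $G_K$-invariants of $W_e \cap W_{dR}^+$ computed inside $W_{dR}$. Here the suite exacte fondamentale $0 \to \Qp \to \be \to \bdr/\bdr^+ \to 0$, tensored with the flat $\Qp$-algebra $E$, gives $(E \otimes \be) \cap (E \otimes \bdr^+) = E$; tensoring with $V$ yields $W_e \cap W_{dR}^+ = V$, and taking invariants gives $H^0(G_K, W(V)) = V^{G_K} = \operatorname{Hom}_{G_K}(E, V)$, as wanted.

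The main obstacle is essential surjectivity, that is, manufacturing an honest representation from an abstract slope-zero pair $W$. I would transport $W$ across the equivalence to a slope-zero $(\phi,\Gamma_K)$-module $D$ over $E \otimes \brig{}{,K}$; by Kedlaya's theorem slope $0$ forces $D$ to be \'etale, hence to descend to an \'etale $(\phi,\Gamma_K)$-module over the bounded subring, to which the overconvergence theorem together with Fontaine's classification attaches an $E$-repr\'esentation $V$ with $\mathrm{D}_{\mathrm{rig}}(V) \cong D$, so that $W \cong W(V)$. This is where the real depth lies, resting on the overconvergence of $p$-adic representations and on the identification of slope zero with \'etale; for $E = \Qp$ this is the content of \cite{LB8} and for general $E$ one invokes the corresponding statements of \cite{KN}. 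Assembling the three steps gives the claimed \'equivalence onto the slope-zero $B$-paires.
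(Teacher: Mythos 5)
Your proposal is correct, but its relation to the paper's proof is worth spelling out: the paper disposes of the whole statement in one line, by citing le th\'eor\`eme 3.2.3 de \cite{LB8} applied to $a/h=0/1$ and remarking that the $E$-linear case follows immediately, whereas you reconstruct the content of that citation. Your full-faithfulness step is the genuinely self-contained part: internal Homs plus the suite exacte fondamentale tensored with the flat $\Qp$-algebra $E$, giving $(E\otimes\be)\cap(E\otimes\bdr^+)=E$ and hence $\operatorname{Hom}(W(V_1),W(V_2))=\operatorname{Hom}_{E[G_K]}(V_1,V_2)$; this argument is correct and the paper never writes it down. For the other two steps you necessarily fall back on the same deep inputs that the cited theorem itself rests on: Kedlaya's slope filtration \cite{KLMT}, the overconvergence theorem, and Fontaine's classification (with \cite{KN} supplying the $E$-linear versions), so these are a re-derivation of the proof of the cited result rather than an alternative to it. One point to make explicit: already in your first step, knowing that the $(\phi,\Gamma_K)$-module attached to $W(V)$ is \'etale is not formal --- it is precisely the statement that the equivalence sends $W(V)$ to $\mathrm{D}_{\mathrm{rig}}(V)$ together with overconvergence, so your steps 1 and 3 share that input. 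This is not a gap, only a matter of attribution. The trade-off between the two presentations: the paper's citation is shorter and delegates all verification; your version makes visible exactly where the depth lies (pure slope $0$ equals \'etale, plus descent to the bounded Robba ring) and which parts of the equivalence are purely formal consequences of the fundamental exact sequence.
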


\begin{proof}
Si $E=\Qp$, alors c'est le th\'eor\`eme 3.2.3 de \cite{LB8} appliqu\'e \`a $a/h=0/1$ et le cas $E$-lin\'eaire s'en d\'eduit imm\'ediatement.
\end{proof}

Si $W$ est une $B^{\otimes E}_{\vert K}$-paire de rang $1$, alors elle n'a qu'une seule pente, que l'on appelle aussi le degr\'e de $W$, not\'e $\deg(W)$. Si $W$ est de rang $\geq 1$, alors on pose $\deg(W) = \deg \det(W)$ ce qui fait de $\deg(\cdot)$ une fonction additive sur les suites exactes.

\begin{prop}\label{degrgun}
Si $X \subset W$ sont deux $B^{\otimes E}_{\vert K}$-paires de rang $1$, alors $\deg(X) \geq \deg(W)$ et $\deg(X)=\deg(W)$ si et seulement si $X=W$.
\end{prop}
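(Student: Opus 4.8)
Le plan est de ramener l'énoncé à un simple décompte de deux quantités de torsion, l'une à la place de de Rham et l'autre sur la partie $\be$, puis d'invoquer la semi-stabilité des $B$-paires isoclines issue du formalisme des pentes de \cite{KLMT} qui sous-tend le théorème~\ref{slopefil}.

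Je commencerais par deux observations de structure. Comme $\bdr$ est un corps et $E/\Qp$ est finie, $E \otimes \bdr$ est un produit fini de corps; or l'inclusion $X_{dR} \hookrightarrow W_{dR}$ est une application injective entre $E \otimes \bdr$-modules libres de rang $1$ (l'injectivité sur le réseau $X_{dR}^+$ s'étend en inversant $t$), donc c'est un isomorphisme. Ainsi $X_{dR}^+ \subset W_{dR}^+$ sont deux $E \otimes \bdr^+$-réseaux stables sous $G_K$ d'un \emph{même} espace $W_{dR}$, et leur quotient $W_{dR}^+/X_{dR}^+$ est de longueur finie $\ell \geq 0$. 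D'autre part, par la proposition~\ref{bebez}, $E \otimes \be$ est un anneau de Bézout (donc intègre, de dimension $1$) et $X_e \subset W_e$ sont deux modules libres de rang $1$; leur quotient $W_e/X_e$ est donc un module de torsion de longueur finie $n \geq 0$. En particulier $X = W$ si et seulement si $n = \ell = 0$, c'est-à-dire si et seulement si $X$ n'est pas un sous-objet propre de $W$.

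Pour conclure, j'utiliserais que toute $B^{\otimes E}_{\vert K}$-paire de rang $1$ est isocline, de pente égale à son degré (c'est le cas $\ell=1$ du théorème~\ref{slopefil}), et que dans le formalisme de \cite{KLMT} une $B$-paire isocline de pente $s$ n'admet que des sous-objets de pente $\geq s$ — l'inégalité étant \emph{stricte} dès que le sous-objet est propre, puisqu'ici un sous-objet propre de rang $1$ a un conoyau de torsion non nul, donc un degré strictement plus grand. Appliqué à $X \subset W$, ceci donne $\deg(X) \geq \deg(W)$, avec égalité exactement lorsque $X$ n'est pas propre, soit $X = W$ d'après le paragraphe précédent. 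La bonne orientation du signe se vérifie sur l'exemple étale : par la proposition~\ref{etalrep} la $B$-paire $(E\otimes\be, E\otimes\bdr^+)$ est de degré nul, tandis que son sous-objet propre $(E\otimes\be, t\,(E\otimes\bdr^+))$ est bien de pente strictement positive.

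La partie délicate est précisément cet apport de théorie des pentes : la seule proposition~\ref{etalrep} ne détermine pas le degré d'une $B$-paire de rang $1$ \emph{non} étale (la valeur et le signe proviennent réellement de la définition des pentes), et c'est donc le théorème de filtration par les pentes de \cite{KLMT}, via la semi-stabilité des objets isoclines, qui fournit l'inégalité et son caractère strict. Si l'on souhaitait une preuve plus autonome, je factoriserais l'inclusion en $X \subset (X_e, W_{dR}^+) \subset W$ (licite grâce à l'isomorphisme $X_{dR} \simeq W_{dR}$ ci-dessus), traitant séparément la modification du réseau de de Rham et celle de la partie $\be$, puis je dévisserais chaque quotient de torsion en crans de longueur $1$ et j'évaluerais par additivité de $\deg$ la contribution strictement positive de chaque cran après torsion au cas étale; mais le calcul de cette contribution élémentaire repose, lui aussi, sur la normalisation des pentes.
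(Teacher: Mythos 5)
Your proposal is necessarily "different" from the paper's proof, because the paper gives no argument at all: it reduces to $E=\Qp$ and cites le corollaire 1.2.8 de \cite{LB8}, where the actual computation lives. Judged on its own terms, your setup is sound: the observation that $X_{dR}\to W_{dR}$ is an isomorphism (so that $X_{dR}^+\subset W_{dR}^+$ are two lattices in the same space), the resulting reduction of ``$X=W$'' to the vanishing of the two torsion quotients $W_e/X_e$ and $W_{dR}^+/X_{dR}^+$, and the inequality $\deg(X)\geq\deg(W)$ obtained from the semistability of isoclinic objects (a rank-$1$ pair is isoclinic of slope equal to its degree, and a nonzero map between isoclinic objects forces the slope of the source to be $\geq$ that of the target) are all correct; this last property is exactly what the paper itself invokes in the proof of the th\'eor\`eme \ref{main} to get $\deg(X)\geq 0$.

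The genuine gap is the equality case. You justify strictness for a proper subobject by ``un conoyau de torsion non nul, donc un degr\'e strictement plus grand'', but that implication \emph{is} the proposition: it is the degree formula (degree of the subobject $=$ degree of the ambient object $+$ length of the torsion cokernel, suitably normalized), which semistability alone can never give --- semistability only yields the weak inequality. You acknowledge this yourself at the end (``le calcul de cette contribution \'el\'ementaire repose, lui aussi, sur la normalisation des pentes''), so as written the crucial step is assumed rather than proved. A secondary caveat: finiteness of the length of $W_e/X_e$ is not a formal consequence of $E\otimes\be$ being de B\'ezout (over the Robba ring, also de B\'ezout, the quotient by $t$ has infinite length); fortunately you only need this quotient to be nonzero. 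To close the main gap with the paper's own toolkit, one can argue as follows: if $\deg(X)=\deg(W)$, then the rank-$1$ pair $X^\vee\otimes W$ has degree $0$, hence is isocline de pente nulle, hence equals $W(\eta)$ for a character $\eta$ of $G_K$ by the proposition \ref{etalrep}; the inclusion $X\subset W$ corresponds by adjunction to a nonzero morphism of $B$-paires $W(\mathbf{1})\to W(\eta)$, which by full faithfulness in the proposition \ref{etalrep} comes from a nonzero map $\mathbf{1}\to\eta$ of representations; thus $\eta$ is trivial, the map is an isomorphism, and $X=W$. Alternatively, one can use the classification of rank-$1$ objects (Colmez, Nakamura): an injection corresponds to a nonzero element of $H^0$ of a rank-$1$ twist, which forces the ratio of the parameters to be $z\mapsto z^{-i}$ with $i\geq 0$ and the degrees to differ by exactly $i$.
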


\begin{proof}
Si $E=\Qp$, cela suit du corollaire 1.2.8 de \cite{LB8} et le cas $E$-lin\'eaire est tout \`a fait semblable.
\end{proof}

Les poids d'une $B$-paire $W$ sont une g\'en\'eralisation des poids de Hodge-Tate des repr\'esentations $p$-adiques. Rappelons que si $U$ est une $\Cp$-repr\'esentation de $G_K$, et que si l'on note $H_K = \Gal(\Qpbar/K(\mu_{p^\infty}))$ et $\Gamma_K = G_K / H_K$, alors la r\'eunion $U^{H_K}_{\mathrm{fini}}$ des sous-$K_\infty$-espaces vectoriels de dimension finie stables par $\Gamma_K$ de $U^{H_K}$ a la propri\'et\'e que l'application $\Cp \otimes_{K_\infty} U^{H_K}_{\mathrm{fini}} \to U$ est un isomorphisme (cf.\ \cite{SN80}). L'espace $U^{H_K}_{\mathrm{fini}}$ est muni de l'application $K_\infty$-lin\'eaire $\nabla_U = \log(\gamma) / \log_p(\chi(\gamma))$ avec $\gamma \in \Gamma_K \setminus \{1\}$ suffisamment proche de $1$. Le polyn\^ome caract\'eristique de $\nabla_U$ est alors \`a coeffcients dans $K$, et m\^eme dans $E \otimes K$ si $U$ est de plus $E$-lin\'eaire. Les racines de ce polyn\^ome sont les poids de Sen de $U$ (le nombre de racines d\'epend de la d\'ecomposition de $E \otimes K$, ce qui explique l'inclusion \'eventuellement stricte dans le (2) de la proposition \ref{incsen} ci-dessous). 

Si $W$ est une $B^{\otimes E}_{\vert K}$-paire, alors $W_{dR}^+ / t W_{dR}^+$ est une $E \otimes \Cp$-repr\'esentation de $G_K$ et on pose $\dsen(W) = (W_{dR}^+ / t W_{dR}^+)^{H_K}_{\mathrm{fini}}$. Les poids de $W$ sont alors les poids de Sen de $\nabla_W$ agissant sur $\dsen(W)$. Notons $\operatorname{poids}(W)$ l'ensemble des poids de Sen de $W$ compt\'es avec multiplicit\'e. La proposition ci-dessous est inspir\'ee des calculs du \S 3 de \cite{FIHP}. On \'ecrit ``$\operatorname{poids}(X) \subset \operatorname{poids}(W)+\ZZ_{\geq 0}$'' pour exprimer le fait que tout poids de $X$ est de la forme $w+a$ o\`u $w$ est un poids de $W$ et $a \in \ZZ_{\geq 0}$.

\begin{prop}\label{incsen}
Si $X \subset W$ sont deux $B^{\otimes E}_{\vert K}$-paires, alors 
\begin{enumerate}
\item $\operatorname{poids}(X) \subset \operatorname{poids}(W)+\ZZ_{\geq 0}$;
\item si $X$ est un sous-objet strict de $W$, alors $\operatorname{poids}(W) \supset \operatorname{poids}(W/X) \cup \operatorname{poids}(X)$;
\item si $X$ et $W$ sont de m\^eme rang et $\operatorname{poids}(X) = \operatorname{poids}(W)$, alors $X=W$.
\end{enumerate}
\end{prop}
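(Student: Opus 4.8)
The plan is to reduce everything to the Sen module $\dsen(W) = (W_{dR}^+ / t W_{dR}^+)^{H_K}_{\mathrm{fini}}$ and its operator $\nabla_W$, using the inclusion $X \subset W$ to compare these Sen modules. First I would record what the hypothesis $X \subset W$ gives at the level of $W_{dR}^+$-lattices: we have an inclusion of $E \otimes \bdr^+$-modules $X_{dR}^+ \subset W_{dR}^+$ which is $G_K$-equivariant, but in general $X_{dR}^+$ need \emph{not} be saturated in $W_{dR}^+$. This lack of saturation is precisely what produces the shift by $\ZZ_{\geq 0}$ in (1). I expect the key structural input to be the elementary divisor theorem over the principal ideal ring $\bdr^+$ (using $E \otimes \bdr^+ \simeq (\bdr^+)^{[E:\Qp]}$): locally on each factor one can choose bases so that the inclusion $X_{dR}^+ \hookrightarrow W_{dR}^+$ is diagonal with entries powers $t^{a_j}$ with $a_j \in \ZZ_{\geq 0}$.

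**Proof of (1).** Granting such a normal form, I would argue as follows. Reducing modulo $t$, the diagonal form shows that $X_{dR}^+/tX_{dR}^+ \to W_{dR}^+/tW_{dR}^+$ factors, after the base change by the $t^{a_j}$, through multiplication by $t$ on the appropriate basis vectors; the effect on Sen weights of twisting a line by $t^{a}$ is to \emph{add} $a$ to the weight (this is the normalization built into $\nabla = \log(\gamma)/\log_p(\chi(\gamma))$, for which the weight of the lattice $t^a \bdr^+$ is $a$). Concretely, passing to $\dsen$ and comparing the characteristic polynomials of $\nabla_X$ and $\nabla_W$ in the chosen compatible bases, each weight of $X$ equals a weight of $W$ plus one of the nonnegative integers $a_j$. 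This is exactly the assertion $\operatorname{poids}(X) \subset \operatorname{poids}(W) + \ZZ_{\geq 0}$. The main obstacle here is purely bookkeeping: one must check that $\dsen$ is functorial and that the $H_K$-finite parts behave well under the inclusion, i.e.\ that the induced map $\dsen(X) \to \dsen(W)$ is injective after inverting $t$ and respects $\nabla$; this follows from the Sen theory reviewed before the statement, since $\Cp \otimes_{K_\infty} (\cdot)^{H_K}_{\mathrm{fini}} \xrightarrow{\sim} (\cdot)$ is an isomorphism.

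**Proof of (2) and (3).** For (2), the hypothesis that $X$ is a \emph{strict} subobject means by definition that $X_{dR}^+$ is saturated in $W_{dR}^+$, hence all the elementary divisors $a_j$ vanish and the quotient $W/X$ is again a $B^{\otimes E}_{\vert K}$-paire. In that case the short exact sequence $0 \to X \to W \to W/X \to 0$ remains exact after applying $W_{dR}^+/t(\cdot)$ and passing to $H_K$-finite parts, so $\dsen$ is additive on this sequence; since $\nabla$ acts compatibly, the characteristic polynomial of $\nabla_W$ factors as the product of those of $\nabla_X$ and $\nabla_{W/X}$, giving $\operatorname{poids}(W) \supset \operatorname{poids}(X) \cup \operatorname{poids}(W/X)$ (with the inclusion rather than equality accounting for the possible splitting of $E \otimes K$ into several factors). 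Finally (3) follows from (1) by a counting argument: if $X$ and $W$ have the same rank, then the two $E \otimes \bdr^+$-lattices have the same rank, so $\det X_{dR}^+ = t^{\sum_j a_j} \det W_{dR}^+$ up to a unit; the hypothesis $\operatorname{poids}(X) = \operatorname{poids}(W)$ forces $\sum_j a_j = 0$, hence every $a_j = 0$, whence $X_{dR}^+ = W_{dR}^+$. Combined with the equality $X_e = W_e$ which follows because both are $E \otimes \be$-representations of the same rank sitting inside $W_e$ with matching determinant (here one invokes the analogue of Proposition \ref{degrgun} on degrees), one concludes $X = W$.
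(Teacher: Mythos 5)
Your proof of (2) is essentially the paper's (exact sequence of lattices, then exactness of Sen's functor, then multiplicativity of characteristic polynomials), but your proofs of (1) and (3) rest on the Smith normal form of the inclusion $X_{dR}^+ \subset W_{dR}^+$ over $\bdr^+$, and there is a genuine gap there: the bases produced by the elementary divisor theorem are \emph{not} $G_K$-stable. Sen weights are invariants of the Galois action, computed from $\nabla$ acting on $\dsen(\cdot)$, so the step ``comparing the characteristic polynomials of $\nabla_X$ and $\nabla_W$ in the chosen compatible bases'' has no content: $\nabla_X$ and $\nabla_W$ act on the different spaces $\dsen(X)$ and $\dsen(W)$, and the Smith basis provides no $G_K$-equivariant map relating them. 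The rule you invoke --- twisting a line by $t^{a}$ adds $a$ to the weight --- is correct only for a $G_K$-\emph{stable} line, because only then does multiplication by $t^{a}$ intertwine the two actions (up to $\chi^{a}$); the lines $\bdr^+ e_j$ of a Smith basis are not stable, so nothing follows about weights. The matching you assert (each weight of $X$ equals a weight of $W$ plus one of the $a_j$) is a statement at least as strong as (1); it does not follow from the module-theoretic normal form, and proving it would require precisely the equivariant d\'evissage you have skipped. Your remark that the ``main obstacle is purely bookkeeping'' (functoriality of $\dsen$) misidentifies the difficulty: that part is standard Sen theory, while the equivariance of the elementary-divisor decomposition genuinely fails.

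For comparison, the paper makes the same idea equivariant as follows: first reduce to the case where $X$ and $W$ have the same rank, by applying (2) to a saturation of $X$; then use commensurability of lattices to get $t^h W \subset X$ (this is the only consequence of your Smith form that survives equivariantly) and run along the $G_K$-stable chain $X = X + t^hW \subset X + t^{h-1}W \subset \cdots \subset X + W = W$, which reduces to the case $tW \subset X \subset W$; in that case two exact sequences of $\Cp$-representations involving $W_{dR}^+/X_{dR}^+$ give $\operatorname{poids}(X) \subset \operatorname{poids}(W) \cup (\operatorname{poids}(W)+1)$, and one concludes by induction along the chain --- every object in sight being Galois-stable. Concerning (3): your determinant computation on the de Rham side is in fact sound and close to the paper's alternative route (``reduce to rank $1$ by taking determinants''), since $\det X_{dR}^+ = t^{\sum_j a_j}\det W_{dR}^+$ is basis-independent, both of these lines are $G_K$-stable, and the sum of the weights is the weight of the determinant, forcing $\sum_j a_j = 0$. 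But your argument for $X_e = W_e$ is circular: Proposition \ref{degrgun} yields $X=W$ from an equality of \emph{degrees}, which are Frobenius-side invariants invisible to the weights and not at your disposal; the ``matching determinant'' on the $\be$-side is what needs to be proved, not something you have.
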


\begin{proof}
Commen\c{c}ons par montrer le (2). Si $X$ est un sous-objet strict de $W$, alors on a une suite exacte $0 \to X_{dR}^+  \to W_{dR}^+  \to (W/X)_{dR}^+  \to 0$ et donc $0 \to \dsen(X) \to \dsen(W) \to \dsen(W/X) \to 0$ ce qui permet de conclure.

Le (2) implique que pour montrer le (1), on peut se ramener au cas o\`u $X$ et $W$ sont de m\^eme rang.  Notons $t W$ la $B$-paire $tW=(W_e,tW_{dR}^+)$ de sorte que $\operatorname{poids}(tW) = \operatorname{poids}(W) + 1$. Comme deux $\bdr^+$-r\'eseaux sont commensurables, il existe $h \geq 0$ tel que $t^h W \subset X$ et en consid\'erant la suite d'inclusions
\[ X = X + t^h W \subset X + t^{h-1} W \subset \cdots \subset X + t W \subset X + W = W, \]
on voit que pour montrer le (1), on peut en plus supposer que $tW \subset X \subset W$. Dans ce cas, on a deux suites exactes 
\begin{gather*} 
0 \to W_{dR}^+ / t W_{dR}^+ \to W_{dR}^+/X_{dR}^+   \\Ê0 \to t (W_{dR}^+ /  X_{dR}^+) \to X_{dR}^+ / t X_{dR}^+ \to  W_{dR}^+ / t W_{dR}^+
\end{gather*}
qui montrent que les poids de $X$ sont dans $\operatorname{poids}(W) \cup \operatorname{poids}(W)+1$.

Enfin pour montrer le (3), on voit que si $\operatorname{poids}(X) = \operatorname{poids}(W)$, alors on a \'egalit\'e \`a chaque \'etape ci-dessus et que cela implique $W=X$. On peut aussi se ramener au cas de rang $1$ en prenant le d\'eterminant.
\end{proof}

Si $W$ est une $B^{\otimes E}_{\vert K}$-paire, alors $W_{dR}$ est une $\bdr$-repr\'esentation de $G_K$ et ces objets sont \'etudi\'es dans le \S 3 de \cite{FIHP}. Si $W$ est une $B^{\otimes E}_{\vert K}$-paire, alors on dit qu'elle est de de Rham si la $\bdr$-repr\'esentation $W_{dR}$ est triviale. Remarquons que si $V$ est une repr\'esentation $E$-lin\'eaire de $G_K$ alors $V$ est de de Rham si et seulement si $W(V)$ est de de Rham.

\begin{prop}\label{isdr}
Si $W$ est une $B^{\otimes E}_{\vert K}$-paire \`a poids entiers et si $X \subset W$ est une $B^{\otimes E}_{\vert K}$-paire de rang $1$, alors $X$ est de de Rham.
\end{prop}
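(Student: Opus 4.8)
The plan is to use the hypothesis on $W$ only to pin down the weights of $X$, and then to prove the self-contained statement that a rank~$1$ $B^{\otimes E}_{\vert K}$-pair with integer weights is de Rham. First, since $X \subset W$ and $W$ is \`a poids entiers, part (1) of Proposition~\ref{incsen} gives $\operatorname{poids}(X) \subset \operatorname{poids}(W) + \ZZ_{\geq 0} \subset \ZZ$, so $X$ itself has integer weights; this is the only place the hypothesis on $W$ enters. The point is now that whether $X$ is de Rham depends only on the $\bdr$-representation $W_{dR} = (E \otimes \bdr) \otimes_{E \otimes \be} X_e$, hence not on the lattice $X_{dR}^+$. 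In particular I may replace $X_{dR}^+$ by $t^{-w} X_{dR}^+$, which shifts all weights by $-w$ (as $\operatorname{poids}(tX) = \operatorname{poids}(X)+1$) without affecting the de Rham condition.

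Assume first (for instance when $K = \Qp$, where the weight is a single integer $w$) that after such a twist $X$ has all weights equal to $0$. Then $C := X_{dR}^+ / t X_{dR}^+$ is a rank~$1$ $E \otimes \Cp$-representation of $G_K$ with vanishing Sen operator, so the $\Gamma_K$-action on $\dsen(X)$ is of finite order and $C$ is a twist of the trivial representation by a finite-order character. Such a character is a coboundary in $\Qpbar^\times \subset (E \otimes \Cp)^\times$ (Hilbert~90, i.e.\ $H^1(G_K, \Qpbar^\times) = 0$), so $C$ is trivial. I then lift triviality from $\Cp$ to $\bdr^+$: the kernel of $(E \otimes \bdr^+)^\times \to (E \otimes \Cp)^\times$ is $1 + t(E \otimes \bdr^+)$, whose graded pieces for the $t$-adic filtration are the $(E \otimes \Cp)(n)$ with $n \geq 1$, and $H^0(G_K, \Cp(n)) = H^1(G_K, \Cp(n)) = 0$ for $n \geq 1$ by Tate's theorem; hence $H^1(G_K, 1 + t(E \otimes \bdr^+)) = 0$. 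Therefore $X_{dR}^+$ is a trivial $E \otimes \bdr^+$-representation, so $W_{dR}$ is trivial and $X$ is de Rham.

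The main obstacle is the reduction to weight~$0$ for general $K$, where the integer weights of $X$ may differ from one embedding of $E \otimes K$ to another, so that no single twist by a power of $t$ (which shifts all weights uniformly) makes them all vanish. I expect to handle this by tensoring $X$ with a rank~$1$ de Rham $B^{\otimes E}_{\vert K}$-pair carrying the opposite (integer) weights --- such an object exists, being attached to a suitable Lubin--Tate or algebraic character --- and using that tensoring by a de Rham pair preserves the de Rham property, which brings one back to the weight~$0$ case above. Alternatively one can first reduce to the \'etale case: twisting $X$ by a rank~$1$ de Rham pair of weight~$0$ and opposite slope makes it isocline of slope~$0$, hence of the form $W(V)$ for a character $V$ by Proposition~\ref{etalrep}, and one concludes by the classical fact that a $1$-dimensional Hodge--Tate representation is de Rham.
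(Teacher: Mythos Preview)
Your argument is essentially correct, but it takes a longer path than the paper's and leaves the general-$K$ case as a sketch. The paper avoids your dichotomy entirely: choose a finite extension $L/\Qp$ containing both $K$ and $E$; since $\Qpbar \subset \bdr$, the ring $E \otimes \bdr$ splits as $\bdr^{[E:\Qp]}$, and this splitting is $G_L$-equivariant. Hence $(X\vert_{G_L})_{dR}$ decomposes as $\bigoplus_{i=1}^{[E:\Qp]} X_{dR}^{(i)}$, where each $X_{dR}^{(i)}$ is a one-dimensional $\bdr$-representation of $G_L$ with integer Sen weight (these are the various weights of $X$, which lie in $\ZZ$ by the same use of Proposition~\ref{incsen}(1) as in your first line). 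Fontaine's results (\S 3.7 of \cite{FIHP}) then give directly that each such one-dimensional $\bdr$-representation is trivial, so $X\vert_{G_L}$ is de Rham, and therefore so is $X$.

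Your weight-$0$ computation is in effect a self-contained proof of this rank-$1$ case of Fontaine's triviality result, and both of your proposed reductions for the non-uniform-weight case (twisting by a rank-$1$ de Rham pair with the opposite weight pattern, or making $X$ \'etale and invoking ``Hodge--Tate $\Rightarrow$ de Rham'' in dimension one) do work, but when written out each requires first splitting $E \otimes K$ over a field containing $E$---which is exactly the move the paper makes at the outset. So the paper's route buys a two-line proof by outsourcing the analytic input to \cite{FIHP}, while yours is more hands-on but ultimately needs the same decomposition to finish.
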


\begin{proof}
Comme $\Qpbar \subset \bdr$, on a une d\'ecomposition $E \otimes \bdr = \bdr^{[E:\Qp]}$ qui est compatible \`a l'action naturelle de $G_L$ sur les deux membres si $L$ est une extension de $\Qp$ qui contient $E$. Si l'on choisit une extension finie $L$ de $\Qp$ qui contient $K$ et $E$, alors on en d\'eduit une d\'ecomposition $(X \vert_{G_L})_{dR} = \oplus_{i=1}^{[E:\Qp]} X_{dR}^{(i)}$. Chaque $X_{dR}^{(i)}$ est une $\bdr$-repr\'esentation de dimension $1$ dont le poids appartient \`a $\ZZ$ et qui est donc triviale par les r\'esultats du \S 3.7 de \cite{FIHP}. C'est donc que $X \vert_{G_L}$ est de de Rham et donc $X$ aussi.
\end{proof}

\section{Repr\'esentations potentiellement triangulines}
\label{potrig}

Si $V$ est une repr\'esentation $E$-lin\'eaire de $G_K$, alors on peut lui associer par la proposition \ref{etalrep} une $B^{\otimes E}_{\vert K}$-paire $W(V)$, et on dit que $V$ est trianguline d\'eploy\'ee si $W(V)$ est une extension successive de $B^{\otimes E}_{\vert K}$-paires de rang $1$. On dit que $V$ est trianguline s'il existe une extension finie $F$ de $E$ telle que $F \otimes_E V$ est trianguline d\'eploy\'ee. Etant donn\'e l'\'equivalence de cat\'egories entre $B^{\otimes E}_{\vert K}$-paires et $(\phi,\Gamma_K)$-modules sur $E \otimes \brig{}{,K}$ cette d\'efinition est compatible avec les d\'efinitions 4.1 et 3.4 de \cite{CTR} (\`a ceci pr\`es que les {\og triangulines \fg} de Colmez correspondent \`a nos {\og triangulines d\'eploy\'ees \fg}). On dit que $V$ est potentiellement trianguline s'il existe une extension finie $L$ de $K$ telle que $V \vert_{G_L}$ est trianguline. 

\begin{theo}\label{main}
Si $V$ est une repr\'esentation $E$-lin\'eaire de $G_{\Qp}$ de dimension $2$ qui est potentiellement trianguline, alors $V$ v\'erifie au moins une des propri\'et\'es suivantes :
\begin{enumerate}
\item $V$ est trianguline d\'eploy\'ee;
\item $V$ est une somme de caract\`eres ou une induite;
\item $V$ est une repr\'esentation de de Rham tordue par un caract\`ere.
\end{enumerate}
\end{theo}

\begin{proof}
Soit $W=W(V)$ la $B^{\otimes E}_{\vert \Qp}$-paire isocline de pente nulle associ\'ee \`a $V$ comme dans la proposition \ref{etalrep}. Si $V$ est potentiellement trianguline, alors il existe une extension finie $F$ de $E$, et une extension finie $K$ de $\Qp$, que l'on peut supposer toutes les deux galoisiennes, telles que l'on puisse \'ecrire 
\[ 0 \to X \to F \otimes_E W \vert_{G_K} \to Y \to 0 \]
avec $X$ et $Y$ deux $B^{\otimes F}_{\vert K}$-paires de rang $1$. Si $g \in \Gal(F/E)$ ou bien si $g \in G_{\Qp}$, alors $g(X)$ et $g(Y)$ sont aussi deux $B^{\otimes F}_{\vert K}$-paires de rang $1$ et on a 
\[ 0 \to g(X) \to F \otimes_E W \vert_{G_K} \to g(Y) \to 0. \]
Si $g(X)=X$ quel que soit $g \in \Gal(F/E)$ et quel que soit $g \in G_{\Qp}$, alors la proposition \ref{galdesc} montre que $X$ provient d'une $B^{\otimes E}_{\vert \Qp}$-paire ce qui fait qu'on est dans le cas (1).

Le reste de la d\'emonstration est donc consacr\'e \`a examiner le cas o\`u il existerait un $g \in \Gal(F/E)$ ou bien un $g \in G_{\Qp}$ tel que $g(X) \neq X$. Dans ce cas, on a $g(X) \cap X = \{0\}$ et donc $g(X) \hookrightarrow Y$. Comme $W$ et donc $F \otimes_E W \vert_{G_K}$ est de pente nulle, le  th\'eor\`eme \ref{slopefil} implique que $\deg(X) \geq 0$. Si $\deg(X)=0$, alors $\deg (X \oplus g(X)) = 0$ et la proposition \ref{degrgun} appliqu\'ee \`a l'inclusion $\det(X \oplus g(X)) \subset \det(F \otimes_E W \vert_{G_K})$ montre que $F \otimes_E W \vert_{G_K}$ est somme directe de $X$ et $g(X)$. Par l'\'equivalence de cat\'egories de la proposition \ref{etalrep}, la repr\'esentation $F \otimes_E V \vert_{G_K}$ est somme directe de deux caract\`eres de $G_K$ ce qui fait par le lemme \ref{supersol} ci-dessous que l'on est dans le cas (2). 

Supposons donc que $\deg(X)>0$. Comme $V$ est une repr\'esentation $E$-lin\'eaire de $G_{\Qp}$ les poids de Sen de $V$ et donc de $W$ sont les deux racines $\lambda$ et $\mu \in \Qpbar$ du polyn\^ome caract\'eristique de $\nabla_W$ qui est \`a coefficients dans $E$. Les poids de $F \otimes_E W \vert_{G_K}$ sont donc des uplets de $\lambda$ et de $\mu$. Par le (2) de la proposition \ref{incsen}, les poids de Sen de $X$ et de $Y$ sont aussi des uplets de $\lambda$ et de $\mu$.

Supposons tout d'abord que $\lambda-\mu \notin \ZZ$. Toujours par le (2) de la proposition \ref{incsen}, le poids de $g(X)$ est aussi un uplet de $\lambda$ et de $\mu$ et comme $g(X) \hookrightarrow Y$ c'est par ailleurs un uplet d'\'el\'ements de $\lambda+\ZZ$ et de $\mu+\ZZ$ par le (1) de la proposition \ref{incsen}. Si $\lambda-\mu \notin \ZZ$, le poids de $g(X)$ est donc n\'ecessairement \'egal \`a celui de $Y$ et par le (3) de la proposition \ref{incsen}, on a $g(X)=Y$. Comme $\deg g(X) = \deg(X) > 0$ et $\deg(Y)<0$, on a une contradiction et le cas $\lambda-\mu \notin \ZZ$ ne peut pas se produire s'il existe $g$ tel que $g(X) \neq X$ avec $\deg(X)>0$.

Nous sommes donc ramen\'es \`a examiner la situation o\`u $\lambda-\mu = a \in \ZZ$ et $\deg(X)>0$. Quitte \`a tordre $V$ par un caract\`ere de poids $-\mu$, on peut supposer que les poids de Sen de $V$ sont $0$ et $a \in \ZZ$. Nous allons montrer que $V$ est alors de de Rham. Pour cela, remarquons que $X \oplus g(X) \subset F \otimes_E W \vert_{G_K}$ et que bien que cette inclusion ne soit pas une \'egalit\'e, on a $X_{dR} \oplus g(X_{dR}) = F \otimes_E W_{dR} \vert_{G_K}$ puisque les deux $\bdr$-espaces vectoriels sont de m\^eme dimension. Afin de terminer la d\'emonstration, on applique la proposition \ref{isdr} qui montre que $X$ et $g(X)$ sont de de Rham et donc $W$ aussi.
\end{proof}

\begin{lemm}\label{supersol}
Si $V$ est une repr\'esentation $E$-lin\'eaire de $G_{\Qp}$ de dimension $2$ telle qu'il existe une extension finie $K$ de $\Qp$ v\'erifiant $V \vert_{G_K} = V_1 \oplus V_2$ alors soit $V$ est une somme de caract\`eres, soit $V$ est une induite.
\end{lemm}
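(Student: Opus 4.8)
The plan is to descend the structure of $V|_{G_K}$ to $G_{\Qp}$ by a Clifford-theoretic argument, the essential input being that $G_{\Qp}$ is prosolvable. First I would replace $K$ by its Galois closure, which only makes $V|_{G_K}$ more split, so I may assume $K/\Qp$ Galois; then $G_K$ is normal of finite index and the hypothesis says $V|_{G_K}$ is semisimple. Since $[G_{\Qp}:G_K]$ is invertible in the characteristic-zero field $E$, any $G_K$-equivariant projector onto a $G_{\Qp}$-stable subspace can be averaged over $G_{\Qp}/G_K$ into a $G_{\Qp}$-equivariant one, so $V$ is itself a semisimple $E[G_{\Qp}]$-module. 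If $V$ is reducible it is then a direct sum of two $E$-valued characters and we are in the first conclusion; so from here on I assume $V$ irreducible.

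Next I would pass to the projective representation $\bar\rho : G_{\Qp} \to \mathrm{PGL}_2(\overline{E})$ attached to $V$. Writing $\chi_1,\chi_2$ for the characters of $G_K$ on $V_1,V_2$, Clifford theory shows that $G_{\Qp}$ permutes $\{\chi_1,\chi_2\}$, and this action is transitive because $V$ is irreducible. If $\chi_1\neq\chi_2$, the isotypic lines $V_1,V_2$ are canonical and are exchanged by $G_{\Qp}$; the stabiliser of $V_1$ is then an index-two subgroup $H=G_{K''}\supset G_K$ with $K''/\Qp$ quadratic, the line $V_1$ is $H$-stable hence affords a character $\psi$ of $H$, and the tautological inclusion $V_1\hookrightarrow V|_H$ gives by Frobenius reciprocity a nonzero $G_{\Qp}$-map $\Ind_{G_{K''}}^{G_{\Qp}}\psi \to V$; as $\psi\neq\psi^g$ the source is irreducible of dimension $2$, so the map is an isomorphism and $V$ is induced.

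There remains the case $\chi_1=\chi_2$, i.e. $\bar\rho(G_K)=1$, where $\bar\rho$ factors through $\Gal(K/\Qp)$ and has finite image $\Gamma\subset\mathrm{PGL}_2(\overline E)$ which, being a quotient of $G_{\Qp}$, is solvable. The finite solvable subgroups of $\mathrm{PGL}_2$ are cyclic, dihedral, or the exceptional groups $A_4,S_4$, the non-solvable $A_5$ being excluded at once. A cyclic $\Gamma$ would force $V$ reducible, contrary to assumption; a dihedral $\Gamma$ has a cyclic subgroup of index two, whose preimage is a quadratic extension $G_{K''}$ over which $V$ is a sum of two distinct characters, and the previous paragraph then displays $V$ as an induced representation. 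The crux, which I expect to be the main obstacle, is therefore to exclude the exceptional images $A_4$ and $S_4$; this is exactly where the arithmetic of $G_{\Qp}$ must enter. Here I would use that a finite quotient $\Gal(K/\Qp)$ carries a normal $p$-group (wild inertia) with procyclic tame-inertia quotient and procyclic Frobenius quotient: for $p$ odd this rigid structure is incompatible with $A_4$ and $S_4$ as projective images, forcing $\Gamma$ to be cyclic or dihedral and hence $V$ induced. I would expect the residue characteristic $p=2$ to require genuine care, since wild inertia can then carry an irreducible two-dimensional action of a cyclic tame/Frobenius quotient; ruling out (or correctly interpreting) the exceptional case there via the obstruction to lifting $\bar\rho$ to $\GL_2$ is the delicate point of the proof.
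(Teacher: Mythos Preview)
Your reduction to the irreducible case via semisimplicity of $V$, and your handling of $\chi_1\neq\chi_2$ by Clifford theory and Frobenius reciprocity, match the paper. The divergence is in the case $\chi_1=\chi_2$. You pass to the projective image and invoke the classification of finite subgroups of $\mathrm{PGL}_2$, leaving the exclusion of $A_4$ and $S_4$ as an acknowledged loose end, with extra anxiety at $p=2$. The paper avoids this detour entirely: its single arithmetic input is that ramification theory makes $\Gal(K/\Qp)$ \emph{supersolvable} (``hyper-r\'esoluble''), i.e.\ there is a chain $G_K=H_0\subset H_1\subset\cdots\subset H_n=G_{\Qp}$ with each $H_i$ normal in $G_{\Qp}$ and $H_{i+1}/H_i$ cyclic. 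One then climbs this chain: if $H_{i-1}$ acts by scalars and $H_i=\langle H_{i-1},g_i\rangle$, then $V|_{H_i}$ is governed by the single operator $\rho(g_i)$ together with scalars and is therefore reducible; by the semisimplicity you already established it splits as two characters, and at the first $i$ where they differ one is back in the easy case with $K$ replaced by $\Qpbar^{H_i}$.

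Note that this same fact closes the gap in your approach uniformly in $p$: neither $A_4$ nor $S_4$ is supersolvable (each lacks a normal subgroup of order $2$, so the Klein four chief factor cannot be refined), hence neither occurs as a quotient of the supersolvable group $\Gal(K/\Qp)$, and there is nothing special about $p=2$. So your route through $\mathrm{PGL}_2$ can be completed without appealing to any lifting obstruction, but the paper's direct inductive climb along the supersolvable series is shorter and sidesteps the subgroup classification altogether.
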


\begin{proof}
On peut supposer que $K$ est une extension galoisienne de $\Qp$. Si $V_1 \neq V_2$ alors le lemme ne pose pas de difficult\'e. Si $V_1 = V_2$ alors posons $H=G_K$ et $G=G_{\Qp}$. La th\'eorie de la ramification montre qu'il existe une suite de groupes 
\[ H = H_0 \subset H_1 \subset \cdots \subset H_n = G \]
telle que $H_i$ est distingu\'e dans $G$ et $H_{i+1}/H_i$ est cyclique (en d'autres termes, $G/H$ est hyper-r\'esoluble). Il existe alors $g_1,\hdots,g_n \in G$ tels que $H_i = \langle H_{i-1},g_i \rangle$. Si $V$ est une somme de caract\`eres, alors on a termin\'e et sinon il existe $1 \leq m \leq n$ tel que $V \vert_{H_{i-1}}$ est somme de deux caract\`eres \'egaux mais pas $V \vert_{H_i}$. La repr\'esentation $V \vert_{H_i}$ n'est pas irr\'eductible car $H_i = \langle H_{i-1},g_i \rangle$ et $H_{i-1}$ agit par des homoth\'eties, ce qui fait que quitte \`a remplacer $K$ par $\Qpbar^{H_i}$, on est ramen\'e au cas $V_1 \neq V_2$.
\end{proof}

\section{Parties fines d'un espace analytique}
\label{pfea}

Afin de montrer les th\'eor\`emes B et C de l'introduction, nous avons besoin de faire quelques 
rappels et compl\'ements sur les espaces rigides analytiques $p$-adiques. Si $\mathcal{X}$ est un tel 
espace et $x \in \mathcal{X}$ en est un point ferm\'e, nous d\'esignons par $K(x)=\OO_{\mathcal{X},x}/\MM_x$ le corps r\'esiduel de $x$, qui est une extension finie de $\Qp$. On rappelle que $\mathcal{X}$ est dit de dimension finie si l'ensemble $\{\dim \OO_{\mathcal{X},x}\}_{x \in \mathcal{X}}$ est born\'e, auquel cas $\dim(\mathcal{X})$ est le maximum de cet ensemble. Tous les espaces ci-dessous sont suppos\'es de dimension finie. Nous notons $\mathcal{B}$ la boule unit\'e ferm\'ee de dimension $1$ sur $\Qp$ (d'alg\`ebre affino\"ide $\Qp \langle t \rangle $).

Un espace rigide est dit \emph{de type d\'enombrable} s'il admet un recouvrement (non n\'eces\-sairement admissible) par un nombre d\'enombrable d'ouverts affino\"{\i}des. Cette propri\'et\'e est bien entendue stable par r\'eunions disjointes d\'enombrables quelconques.

\begin{lemm} 
\label{gcun}
Si $\mathcal{X}$ est un affino\"{\i}de, alors il existe une famille 
d\'enombrable d'ouverts affino\"{\i}des $\mathcal{U}=(\mathcal{U}_i)_{i \geq 0}$ 
de $\mathcal{X}$ telle que pour tout $x \in \mathcal{X}$ et tout voisinage 
ouvert affino\"{\i}de $\mathcal{V}$ de $x$, il existe un entier $i$ tel que $x \in \mathcal{U}_i \subset
\mathcal{V}$.
\end{lemm}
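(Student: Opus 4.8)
The plan is to exhibit $\mathcal{U}$ as a countable collection of Weierstrass subdomains of $\mathcal{X}$ whose defining functions are drawn from a fixed countable dense subset of the affinoid algebra. Write $\mathcal{X} = \operatorname{Sp}(A)$. First I would note that $A$ is \emph{separable}: fixing a surjection $\Qp\langle T_1, \ldots, T_n\rangle \twoheadrightarrow A$ and a residue norm $\|\cdot\|$ on $A$, the images of the polynomials with coefficients in a countable dense subfield of $\Qp$ (for instance $\QQ$) form a countable dense subset $S \subset A$. I then take for $\mathcal{U}$ the family
\[ \Bigl\{\, \{ x \in \mathcal{X} : |f_i(x)| \leq |p|^N,\ 1 \leq i \leq r\} \ : \ r \geq 1,\ f_1, \ldots, f_r \in S,\ N \in \ZZ \,\Bigr\}, \]
which is countable, being indexed by finite tuples from $S$ together with an integer $N$; each of its members is the Weierstrass domain $\mathcal{X}(p^{-N}f_1, \ldots, p^{-N}f_r)$, hence an affinoid open of $\mathcal{X}$.

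Next, fix $x \in \mathcal{X}$ and an affinoid open neighbourhood $\mathcal{V}$ of $x$. Since $A$ is noetherian, the maximal ideal $\MM_x$ is generated by finitely many $g_1, \ldots, g_r$, and I would consider the decreasing family of Weierstrass neighbourhoods $W_N = \{x' : |g_i(x')| \leq |p|^N,\ 1 \leq i \leq r\}$; they all contain $x$ and satisfy $\bigcap_N W_N = \{x\}$ (the only maximal ideal containing $\MM_x$ is $\MM_x$ itself). The crucial point is that some $W_N$ already lies in $\mathcal{V}$. I would deduce this from quasi-compactness of the affinoid $W_0$: the affinoid open $\mathcal{V} \cap W_0$ together with the increasing Laurent opens $U_N = \bigcup_i \{x' \in W_0 : |g_i(x')| \geq |p|^N\}$ form an admissible covering of $W_0$, since their union is $(\mathcal{V} \cap W_0) \cup (W_0 \setminus \{x\}) = W_0$; a finite subcovering and the monotonicity of the $U_N$ give $W_0 \setminus U_{N_*} \subseteq \mathcal{V}$ for some $N_* \geq 0$, whence $W_{N_*+1} \subseteq \mathcal{V}$. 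I expect this admissibility-plus-quasi-compactness step to be the main obstacle, because the canonical topology on the closed points of $\mathcal{X}$ is not compact, so one really has to work in the Grothendieck topology; alternatively one may simply invoke the standard fact that the domains $W_N$ form a fundamental system of neighbourhoods of $x$ for the canonical topology, in which the affinoid subdomain $\mathcal{V}$ is open.

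Finally I would descend to the countable family by approximation. Setting $N = N_*+1$, choose $f_i \in S$ with $\|f_i - g_i\| \leq |p|^{N+1}$ and put $\mathcal{U}_0 = \{x' : |f_i(x')| \leq |p|^N,\ 1 \leq i \leq r\}$, a member of $\mathcal{U}$. Using the elementary bound $|h(x')| \leq |h|_{\mathrm{sup}} \leq \|h\|$, I get $|f_i(x)| = |(f_i - g_i)(x)| \leq |p|^{N+1} \leq |p|^N$ (recall $g_i(x)=0$), so $x \in \mathcal{U}_0$; and for $x' \in \mathcal{U}_0$ the ultrametric inequality gives $|g_i(x')| \leq \max(|f_i(x')|, |(f_i - g_i)(x')|) \leq |p|^N$, so that $\mathcal{U}_0 \subseteq W_N \subseteq \mathcal{V}$. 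This produces the required $\mathcal{U}_0 \in \mathcal{U}$ with $x \in \mathcal{U}_0 \subseteq \mathcal{V}$.
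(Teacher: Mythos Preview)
Your proof is correct and in spirit close to the paper's, though the packaging differs. The paper proceeds by choosing a closed immersion $\mathcal{X} \hookrightarrow \mathcal{B}^n$ and taking for $\mathcal{U}$ the pullbacks of the closed sub-polydiscs of $\mathcal{B}^n$ whose centres have coordinates algebraic over $\QQ$ (the possible radii being automatically countable). You instead work intrinsically in the affinoid algebra $A$, taking Weierstrass domains cut out by finite tuples from a countable dense subset $S \subset A$. Both routes rest on the same two ingredients---the parameters describing Weierstrass neighbourhoods can be drawn from a countable set, and sufficiently small such neighbourhoods form a basis at each point---so the difference is one of presentation rather than of substance. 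Your version is intrinsic and makes the approximation step explicit; the paper's is shorter and more geometric.

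One caution on the step where you show that some $W_N \subset \mathcal{V}$: your first argument, via the covering $\{\mathcal{V} \cap W_0\} \cup \{U_N\}_N$ of $W_0$, is not complete as written, since in rigid geometry a set-theoretic covering of an affinoid by admissible opens need not be an admissible covering, and hence quasi-compactness cannot be invoked directly. You are right to flag this and to fall back on the standard fact that the $W_N$ form a fundamental system of neighbourhoods of $x$ in the canonical topology, in which every affinoid subdomain is open; that argument is correct and is exactly what underlies the paper's terser treatment in the polydisc.
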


\begin{proof} 
Si $\mathcal{X}$ est la boule unit\'e $\mathcal{B}^n$, de param\`etres $t_1,\dots,t_n$, alors nous pouvons 
prendre pour $\mathcal{U}$ la collection de toutes les sous-boules affino\"{\i}des dont le centre $x$ est 
tel que les $t_i(x)$ sont alg\'ebriques sur $\QQ$ (rappelons que les rayons possibles sont d\'enombrables). 
En g\'en\'eral, nous pouvons trouver par d\'efinition une immersion ferm\'ee $\mathcal{X} \to \mathcal{B}^n$ 
pour $n$ assez grand, et l'image inverse dans $\mathcal{X}$ de la collection pr\'ec\'edente d'ouverts 
affino\"{\i}des de $\mathcal{B}^n$ a les propri\'et\'es requises.
\end{proof} 

\begin{coro} 
\label{gcde}
Si $\mathcal{X}$ est de type d\'enombrable, alors de tout recouvrement de $\mathcal{X}$ par des ouverts 
admissibles on peut extraire un recouvrement d\'enombrable. De plus, tout ferm\'e et tout ouvert de 
$\mathcal{X}$ est encore de type d\'enombrable. 
\end{coro}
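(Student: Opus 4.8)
The plan is to fix once and for all a countable covering $\mathcal{X} = \bigcup_{n \geq 0} \mathcal{X}_n$ by affinoid opens, which exists precisely because $\mathcal{X}$ is of countable type, and then to establish the three assertions by intersecting with the $\mathcal{X}_n$: quasi-compactness of affinoids handles the countable subcovering, an elementary base change handles the closed case, and Lemma \ref{gcun} handles the open case.

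For the extraction of a countable subcovering, let $(\mathcal{V}_j)_{j \in J}$ be an admissible covering of $\mathcal{X}$ by admissible opens. For each $n$, the family $(\mathcal{V}_j \cap \mathcal{X}_n)_{j \in J}$ is an admissible covering of the affinoid $\mathcal{X}_n$, so by quasi-compactness it admits a finite refinement $\mathcal{W}_1, \dots, \mathcal{W}_{m}$ by affinoid subdomains of $\mathcal{X}_n$ with $\mathcal{X}_n = \bigcup_{l} \mathcal{W}_l$ and each $\mathcal{W}_l \subseteq \mathcal{V}_{j(l)}$. Choosing one index $j(l)$ per $\mathcal{W}_l$ produces a finite set $J_n \subseteq J$ with $\mathcal{X}_n \subseteq \bigcup_{j \in J_n} \mathcal{V}_j$; then $J' = \bigcup_n J_n$ is countable and $(\mathcal{V}_j)_{j \in J'}$ still covers $\mathcal{X} = \bigcup_n \mathcal{X}_n$. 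I expect the only point needing care here to be purely formal, namely reading off the finite subfamily from the definition of an admissible covering of an affinoid; no estimate is involved.

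The closed case is then immediate: if $\mathcal{Z} \hookrightarrow \mathcal{X}$ is a closed immersion, each $\mathcal{Z} \cap \mathcal{X}_n = \mathcal{Z} \times_{\mathcal{X}} \mathcal{X}_n$ is a closed subspace of the affinoid $\mathcal{X}_n$, hence affinoid, and $\mathcal{Z} = \bigcup_n (\mathcal{Z} \cap \mathcal{X}_n)$ exhibits $\mathcal{Z}$ as a countable union of affinoids.

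The open case is where Lemma \ref{gcun} is genuinely used, and I expect it to be the heart of the matter. Let $\mathcal{U} \subseteq \mathcal{X}$ be an admissible open. For each $n$, Lemma \ref{gcun} applied to $\mathcal{X}_n$ furnishes a countable family $(\mathcal{U}^n_i)_{i \geq 0}$ of affinoid opens of $\mathcal{X}_n$. Now $\mathcal{U} \cap \mathcal{X}_n$ is an admissible open of $\mathcal{X}_n$, hence a union of affinoid subdomains of $\mathcal{X}_n$, so any $x \in \mathcal{U} \cap \mathcal{X}_n$ lies in some affinoid open $\mathcal{W}$ with $\mathcal{W} \subseteq \mathcal{U} \cap \mathcal{X}_n$; the defining property of Lemma \ref{gcun} then yields an index $i$ with $x \in \mathcal{U}^n_i \subseteq \mathcal{W} \subseteq \mathcal{U}$. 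Consequently $\mathcal{U} \cap \mathcal{X}_n = \bigcup \{\, \mathcal{U}^n_i : \mathcal{U}^n_i \subseteq \mathcal{U} \,\}$ is a countable union of affinoids, and therefore so is $\mathcal{U} = \bigcup_n (\mathcal{U} \cap \mathcal{X}_n)$, which shows that $\mathcal{U}$ is again of countable type. The one substantive ingredient is Lemma \ref{gcun}, which guarantees a countable family that is simultaneously fine enough to be slipped inside every affinoid neighbourhood; once it is in hand, the open case reduces to the bookkeeping just described.
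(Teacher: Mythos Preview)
Your treatment of the closed and open cases matches the paper's: closed is immediate by intersecting with the $\mathcal{X}_n$, and for opens you reduce to an open of an affinoid and invoke Lemma~\ref{gcun}, exactly as the paper does.

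For the first assertion, however, your argument and the paper's diverge. You assume $(\mathcal{V}_j)_{j\in J}$ is an \emph{admissible} covering and then appeal to quasi-compactness of each $\mathcal{X}_n$ to extract a finite subfamily. But the statement as written concerns an arbitrary set-theoretic covering of $\mathcal{X}$ by admissible opens (``tout recouvrement de $\mathcal{X}$ par des ouverts admissibles''), not necessarily an admissible one; the paper is explicit about this distinction already in the definition of ``de type d\'enombrable''. Quasi-compactness of an affinoid only controls admissible coverings, so your argument does not cover the general case. The paper instead reduces to $\mathcal{X}$ affinoid and applies Lemma~\ref{gcun} directly: for each $x$ choose $j$ with $x\in\mathcal{V}_j$, then an affinoid neighbourhood of $x$ inside $\mathcal{V}_j$, then some $\mathcal{U}_i$ from the countable family with $x\in\mathcal{U}_i\subset\mathcal{V}_j$; picking one $j$ for each relevant $\mathcal{U}_i$ yields a countable subcovering. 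This is precisely the mechanism you already use for the open case, so the repair is immediate: run that same argument for the first assertion instead of invoking quasi-compactness.
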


\begin{proof} 
Pour le premier point, $\mathcal{X}$ \'etant de type d\'enombrable on peut le supposer affino\"{\i}de, 
auquel cas cela d\'ecoule du lemme pr\'ec\'edent. La seconde assertion est \'evidente pour un ferm\'e, 
et dans le cas d'un ouvert elle se ram\`ene \`a voir qu'un ouvert d'un affino\"{\i}de est de type 
d\'enombrable, ce qui d\'ecoule encore du lemme \ref{gcun} ci-dessus.
\end{proof}

\begin{defi} 
\label{gctr}
Si $\mathcal{X}$ est un espace rigide, une partie $\mathcal{A} \subset \mathcal{X}$ sera dite \emph{fine} 
s'il existe un espace rigide $\mathcal{Y}$ de type d\'enombrable, 
ainsi qu'un morphisme analytique $f : \mathcal{Y} \to \mathcal{X}$, tels que 
$\dim(\mathcal{Y}) < \dim(\mathcal{X})$ et $\mathcal{A} \subset f(\mathcal{Y})$.
\end{defi}

Par exemple, si $\dim(\mathcal{X})=1$, ses parties fines sont ses parties d\'enombrables. Le corps $\Qp$ \'etant 
ind\'enombrable il est bien connu qu'une telle partie est propre. Nous allons maintenant v\'erifier que ce r\'esultat s'\'etend en toute dimension.

\begin{lemm}
\label{gcqu} 
Soient $\mathcal{X}$ un espace analytique et $\mathcal{A} \subset \mathcal{X}$ une partie fine.
\begin{enumerate}
\item Si $\mathcal{U} \subset \mathcal{X}$ est un ouvert admissible de dimension $\dim(\mathcal{X})$, alors $\mathcal{A} \cap \mathcal{U}$ est une partie fine de $\mathcal{U}$.

\item Si $\nu :\mathcal{X}' \rightarrow \mathcal{X}$ est un morphisme fini tel que $\dim (\mathcal{X}') =\dim (\mathcal{X})$, alors $\nu^{-1}(A)$ est une partie fine de $\mathcal{X}'$.  Cela vaut en pariculier si $\nu$ est la normalisation de $\mathcal{X}$.

\item Si $\mathcal{X}$ est irr\'eductible et si $\{\mathcal{X}_\lambda \}_{\lambda \in \Lambda}$ est un ensemble ind\'enombrable de ferm\'es de $\mathcal{X}$ irr\'eductibles, deux \`a deux distincts, et de dimension $\dim(\mathcal{X})-1$, alors hors d'un ensemble d\'enombrable de $\lambda \in \Lambda$, la partie $\mathcal{A} \cap \mathcal{X}_\lambda$ est une partie fine de $\mathcal{X}_\lambda$.
\end{enumerate}
\end{lemm}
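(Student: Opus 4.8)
The three assertions concern the behaviour of a fine subset $\mathcal{A}$ under three natural operations: restriction to an admissible open of full dimension, pullback along a finite surjection, and trace on a generic member of an uncountable family of codimension-one closed subsets. All three should reduce to unwinding Definition \ref{gctr}: there is a countable-type space $\mathcal{Y}$ with $\dim(\mathcal{Y}) < \dim(\mathcal{X})$ and an analytic map $f : \mathcal{Y} \to \mathcal{X}$ with $\mathcal{A} \subset f(\mathcal{Y})$. The plan is to produce, in each case, an analogous witness $(\mathcal{Y}', f')$ of strictly smaller dimension over the target space.

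\textbf{Assertion (1).}
First I would set $\mathcal{Y}' = f^{-1}(\mathcal{U}) \subset \mathcal{Y}$, an admissible open, and let $f'$ be the restriction of $f$. Since $\mathcal{Y}$ is of countable type, its open $\mathcal{Y}'$ is still of countable type by Corollaire \ref{gcde}. One has $\dim(\mathcal{Y}') \leq \dim(\mathcal{Y}) < \dim(\mathcal{X}) = \dim(\mathcal{U})$, the last equality being the hypothesis on $\mathcal{U}$. Finally $\mathcal{A} \cap \mathcal{U} \subset f(\mathcal{Y}) \cap \mathcal{U} = f'(\mathcal{Y}')$, so $(\mathcal{Y}', f')$ witnesses that $\mathcal{A} \cap \mathcal{U}$ is fine in $\mathcal{U}$. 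This case is essentially formal.

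\textbf{Assertion (2).}
Here I would form the fibre product $\mathcal{Y}' = \mathcal{Y} \times_{\mathcal{X}} \mathcal{X}'$ with its two projections, taking $f' : \mathcal{Y}' \to \mathcal{X}'$ to be the second one. Since $\nu$ is finite, $f'$ is finite, hence $\dim(\mathcal{Y}') \leq \dim(\mathcal{Y}) < \dim(\mathcal{X}) = \dim(\mathcal{X}')$; and the base change of a finite map being finite, $\mathcal{Y}'$ is of countable type because $\mathcal{Y}$ is (a finite morphism has affinoid source over each affinoid of the target). To see $\nu^{-1}(\mathcal{A}) \subset f'(\mathcal{Y}')$, take $x' \in \nu^{-1}(\mathcal{A})$; then $\nu(x') \in \mathcal{A} \subset f(\mathcal{Y})$, so there is $y \in \mathcal{Y}$ with $f(y) = \nu(x')$, and the pair $(y, x')$ defines a point of the fibre product mapping to $x'$. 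The only point needing care is that the fibre product of rigid spaces is compatible with these dimension and countable-type estimates when one factor is finite over the base.

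\textbf{Assertion (3) — the main obstacle.}
This is where the real work lies. The difficulty is that for most $\lambda$ the intersection $f(\mathcal{Y}) \cap \mathcal{X}_\lambda$ could a priori be all of $\mathcal{X}_\lambda$, which would \emph{not} exhibit $\mathcal{A} \cap \mathcal{X}_\lambda$ as fine. The key geometric input must be that the image $f(\mathcal{Y})$, having dimension $< \dim(\mathcal{X})$, can contain an entire $\mathcal{X}_\lambda$ (of dimension $\dim(\mathcal{X})-1$) only for countably many $\lambda$. I would argue as follows: decompose $\mathcal{Y}$ into countably many affinoids (Corollaire \ref{gcde}) and reduce to the case where $f(\mathcal{Y})$ is contained in a finite union of irreducible analytic subsets $Z_j \subset \mathcal{X}$ of dimension $< \dim(\mathcal{X})$. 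An irreducible closed $\mathcal{X}_\lambda$ of dimension $\dim(\mathcal{X})-1$ is contained in some $Z_j$ only if $\mathcal{X}_\lambda = Z_j$ (by irreducibility and the dimension count), and this can happen for only finitely many $\lambda$ per $j$, hence for countably many $\lambda$ in total. For every $\lambda$ outside this countable set, $Z_j \cap \mathcal{X}_\lambda$ has dimension $< \dim(\mathcal{X}_\lambda)$ for each $j$; pulling back the $\mathcal{Y}$-decomposition and intersecting with $f^{-1}(\mathcal{X}_\lambda)$ then yields a countable-type space of dimension $< \dim(\mathcal{X}_\lambda)$ surjecting onto a set containing $\mathcal{A} \cap \mathcal{X}_\lambda$, proving fineness. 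I expect the delicate step to be the passage from ``$f(\mathcal{Y})$ meets $\mathcal{X}_\lambda$ in full dimension'' to ``$\mathcal{X}_\lambda$ is one of countably many components of the (analytic) Zariski closure of $f(\mathcal{Y})$'', which requires knowing that this closure has only countably many irreducible components — itself a consequence of the countable-type hypothesis on $\mathcal{Y}$ via Corollaire \ref{gcde}.
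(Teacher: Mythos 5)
Your treatments of (1) and (2) are correct and coincide with the paper's (restriction of $f$ to $f^{-1}(\mathcal{U})$, resp.\ the fibre product $\mathcal{Y} \times_{\mathcal{X}} \mathcal{X}'$, with Corollaire \ref{gcde} and finiteness giving the countable-type and dimension bounds). The gap is in (3), at the very last step. You claim that for $\lambda$ outside your countable bad set, the space $f^{-1}(\mathcal{X}_\lambda)$ (cut into the affinoid pieces of $\mathcal{Y}$) is of countable type \emph{and of dimension $< \dim(\mathcal{X}_\lambda)$}, and can serve as the witness of fineness. The dimension claim is false: take $\mathcal{Y} = \mathcal{B}^1$, $\mathcal{X} = \mathcal{B}^2$, and $f$ the constant map onto a point $P$. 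Then no $\mathcal{X}_\lambda$ is contained in $Z_1 = \{P\}$, so every $\lambda$ is ``good'' for you; yet for each of the uncountably many irreducible curves $\mathcal{X}_\lambda$ through $P$ one has $f^{-1}(\mathcal{X}_\lambda) = \mathcal{B}^1$, of dimension $1 = \dim(\mathcal{X}_\lambda)$. The point is that Definition \ref{gctr} constrains the dimension of the \emph{source} of the witnessing map, not of its image; knowing that $f(\mathcal{Y}) \cap \mathcal{X}_\lambda \subset \bigcup_j (Z_j \cap \mathcal{X}_\lambda)$ is of small dimension does not by itself produce a map from a countable-type space of dimension $< \dim(\mathcal{X}_\lambda)$ whose image contains $\mathcal{A} \cap \mathcal{X}_\lambda$. (Nor can you simply take the sets $Z_j \cap \mathcal{X}_\lambda$ themselves as witnesses: nothing in your construction guarantees these closed subsets are of countable type, since $\mathcal{X}$ itself is not assumed to be.)

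The paper's proof of (3) resolves exactly this point, and does so on the source side, by contraposition. If $\mathcal{A} \cap \mathcal{X}_\lambda$ is not fine, then $\mathcal{Y}_\lambda = f^{-1}(\mathcal{X}_\lambda)$ has dimension exactly $n-1$ (where $n = \dim(\mathcal{X})$), and its $(n-1)$-dimensional irreducible components $\mathcal{T}_i$ are necessarily irreducible components of $\mathcal{Y}$ itself; one then covers $\mathcal{A} \cap \mathcal{X}_\lambda$ by $f(\mathcal{T}')$ (the part of $\mathcal{Y}_\lambda$ of dimension $< n-1$, a legitimate witness) together with the Zariski closures $\mathcal{Z}_i$ of the $f(\mathcal{T}_i)$ taken \emph{inside} $\mathcal{X}_\lambda$. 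Either some $\mathcal{Z}_i$ equals $\mathcal{X}_\lambda$, i.e.\ some irreducible component of $\mathcal{Y}$ has Zariski-dense image in $\mathcal{X}_\lambda$ --- and since $\mathcal{Y}$ has only countably many irreducible components, each determining at most one such $\lambda$, this confines the bad $\lambda$ to a countable set --- or all $\mathcal{Z}_i$ are proper, hence of dimension $< n-1$ by irreducibility of $\mathcal{X}_\lambda$, and $\mathcal{A} \cap \mathcal{X}_\lambda$ is fine after all, a contradiction. Two further differences are worth noting: the dimension bound on the $\mathcal{Z}_i$ comes for free from the dichotomy ``dense or proper'' plus irreducibility of $\mathcal{X}_\lambda$, so the paper never needs your unproved reduction that the global Zariski closure of $f(\mathcal{Y})$ in $\mathcal{X}$ is a countable union of irreducible analytic subsets of dimension $< n$ (which would require knowing that Zariski closures of images of affinoids have dimension bounded by that of the source); and the countability of the set of irreducible components of $\mathcal{Y}$ is not a consequence of Corollaire \ref{gcde} as you assert, but is obtained in the paper by passing to the normalization of $\mathcal{Y}$, which is again of countable type.
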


\begin{proof} 
Le (1) d\'ecoule de la d\'efinition et de ce qu'un ouvert d'un espace de type d\'enombrable l'est encore. Pour le (2), \'ecrivons $\mathcal{A} \subset f(\mathcal{Y})$ avec $\mathcal{Y}$ de type d\'enombrable et de dimension $< \dim (\mathcal{X})$ ; l'ensemble $\nu^{-1}(\mathcal{A})$ est inclus dans l'image du morphisme naturel $\mathcal{Y} \times_{\mathcal{X}}{\mathcal{X}}' \rightarrow \mathcal{X}'$. Cela permet de conclure car l'espace $\mathcal{Y} \times_{\mathcal{X}}\mathcal{X}'$ est de type d\'enombrable, \'etant  fini sur $\mathcal{Y}$ qui a cette propri\'et\'e, et de dimension $\leq \dim (\mathcal{Y} )< \dim (\mathcal{X}')$ pour la m\^eme raison.

V\'erifions \`a pr\'esent le (3). On peut supposer 
que $\mathcal{A}=f(\mathcal{Y})$ avec $\dim(\mathcal{Y})\leq n-1$ o\`u
$n=\dim (\mathcal{X})$. Soit $\Lambda' \subset \Lambda$ le sous-ensemble des $\lambda$ tels que 
$\mathcal{Y}$ ait une composante irr\'eductible $\mathcal{T}$ avec $f(\mathcal{T})$ Zariski-dense 
dans $\mathcal{X}_\lambda$. Comme $\mathcal{Y}$ est de type d\'enombrable, il en va de m\^eme de 
sa normalisation, de sorte que $\mathcal{Y}$ n'a qu'un nombre d\'enombrable de composantes irr\'eductibles, 
et donc $\Lambda'$ est d\'enombrable. 

Posons $\mathcal{A}_\lambda = \mathcal{A} \cap \mathcal{X}_\lambda$ et consid\'erons $\lambda \in \Lambda$ 
tel que $\mathcal{A}_\lambda$ n'est pas une partie fine de $\mathcal{X}_\lambda$. Nous allons montrer que 
$\lambda \in \Lambda'$. L'espace $\mathcal{Y}_\lambda=f^{-1}(\mathcal{X}_\lambda)$ est un ferm\'e de 
$\mathcal{Y}$ et en particulier il est de type d\'enombrable. Comme $\mathcal{A}_\lambda \subset f(\mathcal{Y}_\lambda)$ 
n'est pas une partie fine de $\mathcal{X}_\lambda$, l'espace $\mathcal{Y}_\lambda$ est de dimension $\geq n-1$. Il vient 
que $\dim(\mathcal{Y}_\lambda)= \dim (\mathcal{Y})=n-1$ car $\dim (\mathcal{Y})\leq n-1$. La d\'ecomposition en composantes irr\'eductibles de la nilr\'eduction de $\mathcal{Y}_\lambda$ est 
donc de la forme $\mathcal{T}\cup \mathcal{T}'$ o\`u $\mathcal{T}$ est une r\'eunion non vide de 
composantes irr\'eductibles $\mathcal{T}_i$ de $\mathcal{Y}$ et $\dim(\mathcal{T}') < n-1$. Si pour 
chaque $i$, l'adh\'erence Zariski $\mathcal{Z}_i$ de $f(\mathcal{T}_i)$ dans $\mathcal{X}_\lambda$ est 
stricte, donc de dimension $< \dim \mathcal{X}_\lambda$ par irr\'eductibilit\'e de $\mathcal{X}_\lambda$, alors 
$\mathcal{A}_\lambda$ est inclus dans la partie fine $f(\mathcal{T}') \cup (\cup_i \mathcal{Z}_i)$ de $\mathcal{X}_\lambda$. On en d\'eduit que l'un des $f(\mathcal{T}_i)$ est Zariski-dense dans $\mathcal{X}_\lambda$, 
et donc que $\lambda \in \Lambda'$.
\end{proof}

Si $K$ est une extension finie de $\Qp$, nous entendons par \emph{$K$-boule} de dimension $r$ l'affino\"{\i}de 
$\mathcal{B}^r_K$ sur $\Qp$ d'alg\`ebre $K \langle t_1,t_2,\hdots,t_r\rangle$. Si $\mathcal{X}$ est un affino\"{\i}de 
et si $x \in \mathcal{X}$ en est un point r\'egulier, rappelons qu'un r\'esultat classique d\^u \`a Kiehl 
\cite[Thm. 1.18]{K67} assure l'existence d'un voisinage ouvert affino\"{\i}de $\mathcal{U}$ de $x$ dans $\mathcal{X}$ 
qui est isomorphe \`a une $K(x)$-boule\footnote{Nous remercions Laurent Fargues de nous avoir indiqu\'e cette r\'ef\'erence.}.

\begin{prop}
\label{gcci} 
Si $\mathcal{X}$ est un espace analytique de dimension $>0$, alors une partie fine de $\mathcal{X}$ en est une partie stricte. 

Plus pr\'ecis\'ement, soit $\mathcal{A} \subset \mathcal{X}$ une partie fine et soit $x \in \mathcal{X}$ tel que 
$\dim \OO_{\mathcal{X},x}=\dim (\mathcal{X})>0$. Si $x$ est r\'egulier, alors on peut trouver un morphisme 
analytique $$\iota : \mathcal{B}^1_{K(x)} \longrightarrow \mathcal{X}$$ tel que $\iota(0)=x$, tel que $\iota^{-1}(\mathcal{A})$ 
est d\'enombrable, et qui est une immersion ferm\'ee vers un voisinage affino\"ide de $x$. Si $x$ n'est pas r\'egulier, 
on peut encore trouver un $\iota$ comme ci-dessus satisfaisant les deux premi\`ere conditions, si l'on s'autorise \`a 
remplacer $K(x)$ par une extension finie.
\end{prop}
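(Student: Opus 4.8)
The plan is to reduce Proposition~\ref{gcci} to the fineness machinery established in Lemme~\ref{gcqu}, together with Kiehl's local structure theorem and the countability results of Lemme~\ref{gcun} and Corollaire~\ref{gcde}. The first observation is that the final sentence (``une partie fine en est une partie stricte'') follows from the more precise second paragraph: if one produces $\iota : \mathcal{B}^1_{K(x)} \to \mathcal{X}$ with $\iota(0)=x$ and $\iota^{-1}(\mathcal{A})$ dénombrable, then since $K(x)$ (or a finite extension) is uncountable, $\iota^{-1}(\mathcal{A}) \subsetneq \mathcal{B}^1_{K(x)}$, whence $\mathcal{A} \neq \mathcal{X}$. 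So the whole content is the construction of $\iota$.

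\emph{Cas régulier.} First I would treat the case where $x$ is régulier. By Kiehl's theorem \cite[Thm.~1.18]{K67}, there is an ouvert affinoïde $\mathcal{U} \ni x$ isomorphe à une $K(x)$-boule $\mathcal{B}^n_{K(x)}$ avec $n = \dim(\mathcal{X})$, et dans ces coordonnées on peut supposer $x$ à l'origine. Par le (1) de Lemme~\ref{gcqu}, $\mathcal{A} \cap \mathcal{U}$ est encore une partie fine de $\mathcal{U} \simeq \mathcal{B}^n_{K(x)}$, donc il suffit de traiter le cas où $\mathcal{X} = \mathcal{B}^n_{K(x)}$. L'idée est alors de choisir une droite $\mathcal{B}^1_{K(x)} \hookrightarrow \mathcal{B}^n_{K(x)}$ passant par l'origine dont l'intersection avec $\mathcal{A}$ soit dénombrable. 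Pour cela je ferais une récurrence sur $n$ en utilisant le (3) de Lemme~\ref{gcqu}: la famille des hyperplans $\{t_n = c\}$ (ou plus généralement un pinceau indénombrable d'hyperplans irréductibles de dimension $n-1$ passant par $0$) fournit un ensemble indénombrable de fermés deux à deux distincts, et hors d'un sous-ensemble dénombrable de paramètres, $\mathcal{A} \cap \mathcal{X}_\lambda$ est fine dans $\mathcal{X}_\lambda$. On choisit un tel $\lambda$ dont l'hyperplan contient $x$, on applique l'hypothèse de récurrence à l'affinoïde régulier $\mathcal{X}_\lambda$ (qui est encore une boule, ou au moins régulier en $x$), et on obtient la $\mathcal{B}^1$ voulue comme immersion fermée dans $\mathcal{X}_\lambda \subset \mathcal{U}$.

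\emph{Cas singulier et obstacle principal.} Si $x$ n'est pas régulier, l'énoncé autorise à agrandir $K(x)$ et n'exige plus l'immersion fermée, seulement $\iota(0)=x$ et $\iota^{-1}(\mathcal{A})$ dénombrable. La stratégie serait de passer à la normalisée $\nu : \mathcal{X}' \to \mathcal{X}$: par le (2) de Lemme~\ref{gcqu}, $\nu^{-1}(\mathcal{A})$ est une partie fine de $\mathcal{X}'$, lequel est normal donc régulier en codimension $\leq 1$ et dont les points au-dessus de $x$ ont un corps résiduel qui est une extension finie de $K(x)$. Il suffit de choisir un point $x' \in \nu^{-1}(x)$ avec $\dim \OO_{\mathcal{X}',x'} = \dim(\mathcal{X}')$, d'appliquer le cas régulier en $x'$ (quitte à se placer sur une composante où $x'$ est un point régulier, ce que la normalité facilite), puis de composer avec $\nu$. \textbf{L'obstacle principal} sera précisément de garantir l'existence d'un tel point $x'$ qui soit régulier sur $\mathcal{X}'$ et de dimension maximale: la normalisation ne résout pas toutes les singularités en dimension $\geq 2$, donc il faut soit invoquer que le lieu singulier de $\mathcal{X}'$ est un fermé analytique propre (de dimension $< n$, donc fine) que l'on peut éviter grâce au (1) de Lemme~\ref{gcqu}, soit recourir à une désingularisation partielle suffisante pour trouver une courbe lisse passant par un relevé de $x$. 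C'est ce contrôle du lieu régulier au-dessus de $x$ — et la vérification que l'image de la courbe obtenue passe bien par $x$ lui-même et non seulement par un point proche — qui demande le plus de soin.
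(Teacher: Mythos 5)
Your treatment of the regular case is essentially the paper's: Kiehl plus the (1) of the lemme \ref{gcqu} to reduce to a ball $\mathcal{B}^n_{K(x)}$ centered at $x$, then induction on $n$ via the (3) of that lemme applied to an uncountable pencil of closed sub-balls of dimension $n-1$ passing through $x$ (the paper takes $t_1=\lambda t_2$, $\lambda \in \Zp^\times$). Note that your first suggestion, the family $\{t_n=c\}$, does not work as stated: only $c=0$ passes through $x$, so one cannot both discard a countable set of parameters and keep a member through $x$; your parenthetical correction (a pencil through the origin) is the right choice, and is what the paper does. Your derivation of the first assertion from the precise one is also fine, modulo exhibiting a suitable point, which the paper gets by passing to the reduction and using that the regular locus is Zariski-open and dense.

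The singular case is where your proposal has a genuine gap, and you correctly flag it yourself: after normalizing $\mathcal{X}$, there is no reason whatsoever that some point of $\nu^{-1}(x)$ should be regular in $\mathcal{X}'$ --- normalization resolves singularities only in dimension $1$, and a normal space of dimension $\geq 2$ can be singular at every point above $x$ (a normal surface singularity, say). Avoiding the singular locus via the (1) of the lemme \ref{gcqu} is not available, since the curve must pass through $x$ itself and not merely through a nearby point; and a ``d\'esingularisation partielle'' is both unjustified in your text and a much heavier tool than anything the paper uses. The paper's actual argument goes in the opposite direction: instead of resolving above $x$, it projects down. One reduces to $\mathcal{X}$ affino\"{\i}de, normal and connexe (parts (1) and (2) of the lemme \ref{gcqu}), then takes a Noether--Tate normalization $\pi : \mathcal{X} \to \mathcal{B}^n$, finite and surjective. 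The image $\pi(\mathcal{A})$ is still fine (compose $f$ with $\pi$: the source keeps countable type and dimension $<n$), and $\pi(x)$ lies in the ball, where the regular case applies: one gets a closed immersion $\mathcal{B}^1_{K(\pi(x))} \to \mathcal{B}^n_{K(\pi(x))}$ through $\pi(x)$ meeting $\pi(\mathcal{A})$ in a countable set. Its preimage $\mathcal{C}=\pi^{-1}(\mathcal{B}^1)$ in $\mathcal{X} \times K(\pi(x))$ is then a closed subspace of equidimension $1$ containing $x$ and meeting $\mathcal{A}$ countably, both because $\pi$ is finite. Only now does one normalize: the normalization of the \emph{curve} $\mathcal{C}$ is regular (normal of dimension $1$), carries a point above $x$ whose residue field is finite over $K(x)$, and Kiehl applied there produces $\iota$. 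This ``project to a ball, pull back the line, then normalize the resulting curve'' step is precisely the missing idea: it confines the use of normalization to the one case where it actually regularizes, namely curves.
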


\begin{proof} Soit $\mathcal{A}=f(\mathcal{Y}) \subset \mathcal{X}$ une partie fine et $x \in \mathcal{X}$ de dimension 
$\dim X >0$. D\'emontrons tout d'abord l'assertion concernant le cas o\`u $x$ est r\'egulier. D'apr\`es le (1) du lemme 
\ref{gcqu} et le r\'esultat de Kiehl, on peut supposer que $\mathcal{X}$ est une $K(x)$-boule de dimension $\dim (\mathcal{X})=n>0$. 
Si $n=1$ alors $\mathcal{A}$ est d\'enombrable et le r\'esultat est \'evident. Sinon on proc\`ede par r\'ecurrence sur $n$. On 
choisit une famille ind\'enombrable de sous-$K(x)$-boules ferm\'ees centr\'ees en $x$ et de dimension $n-1$ (par exemple $t_1=\lambda t_2$ pour $\lambda \in \mathbf{Z}_p^\times$), et on conclut par le (3) du lemme \ref{gcqu}.

La premi\`ere assertion de la proposition s'en d\'eduit car on peut supposer que $\mathcal{X}$ est r\'eduit, auquel cas son lieu r\'egulier est  un ouvert Zariski et Zariski-dense, donc contient un point de dimension $\dim \mathcal{X}$. 

V\'erifions le dernier point. Quitte \`a remplacer $\mathcal{X}$ par un ouvert affino\"{\i}de de dimension $\dim(\mathcal{X})$, et d'apr\`es le (1) du lemme \ref{gcqu}, on peut supposer que $\mathcal{X}$ est affino\"{\i}de contenant $x$, puis que $\mathcal{X}$ est normal et connexe d'apr\`es le (2) du m\^eme lemme. Par normalisation de Noether-Tate, on peut donc trouver un morphisme fini et surjectif $\pi: \mathcal{X}\rightarrow \mathcal{B}^n$ avec $n=\dim(\mathcal{X})$. Par cons\'equent, $\pi(\mathcal{A})$ est une partie fine de $\mathcal{B}^n$. Par l'argument pr\'ec\'edent, il existe une immersion ferm\'ee $\mathcal{B}^1_{K(\pi(x))} \rightarrow \mathcal{B}^n_{K(\pi(x))}$  ne rencontrant $\pi(\mathcal{A})$ qu'en un sous-ensemble d\'enombrable. L'image inverse $\mathcal{C}$ de cette boule dans $\mathcal{X}  \times K(\pi(x))$ est un ferm\'e d'\'equi-dimension $1$ contenant $x$ et ne recontrant $\mathcal{A}$ qu'en un sous-ensemble d\'enombrable. Quitte \`a normaliser $\mathcal{C}$, on peut finalement supposer que $\mathcal{X}$ est r\'egulier de dimension $1$, et on conclut encore par le r\'esultat de Kiehl.
\end{proof}

\section{Repr\'esentations non potentiellement triangulines}

\subsection{Rappel sur les espaces de d\'eformations} Soit $q$ une puissance de $p$, $\Fq$ le corps fini \`a $q$ \'el\'ements, 
$\Qq$ l'extension non ramifi\'ee de $\Qp$ de corps r\'esiduel $\Fq$ et $\Zq$ l'anneau des entiers de $\Qq$. Soit $$r : G_{\Qp} 
\to \GL_2(\Fq)$$ une repr\'esentation continue et absolument irr\'eductible. D'apr\`es un r\'esultat classique de Mazur (cf. 
\cite{MDG}), le foncteur des d\'eformations de $r$ aux $\Zq$-alg\`ebres locales finies de corps r\'esiduel $\Fq$ est 
pro-repr\'esentable par une $\Zq$-alg\`ebre locale noeth\'erienne compl\`ete $R(r)$ de corps r\'esiduel $\Fq$. On 
d\'esigne par  $\mathcal{X}(r)$ l'espace analytique $p$-adique associ\'e par Berthelot \`a $R(r)[1/p]$. D'apr\`es 
un th\'eor\`eme de Tate, si $\delta=\dim_{\Fq} {\rm Hom}_{G_{\Qp}}(r,r(1))$, alors $\dim_{\Fq} H^2(G_{\Qp},{\rm ad }(r))=\delta$ 
et $\dim_{\Fq}H^1(G_{\Qp},{\rm ad}(r))=5+\delta$. Lorsque $r \not \simeq r(1)$, ce qui est par exemple toujours satisfait si $p>3$, 
il vient que $R(r) \simeq \Zq\dcroc{t_0,t_1,\hdots,t_4}$, de sorte que $\mathcal{X}(r)$
est isomorphe \`a la boule unit\'e ouverte de dimension $5$ sur $\Qq$. Dans tous les cas, comme on le verra ci-dessous, $\mathcal{X}(r)$ est 
r\'egulier de dimension $5$.\footnote{Supposons $r \simeq r(1)$. Si $p=2$ ou $p=3$, on peut voir qu'en fait $\mathcal{X}(r)$ est 
la r\'eunion de $p$ boules unit\'es ouvertes sur $\Qq$ ; mieux, on a $R(r) \simeq
\mathbb{Z}_q\dcroc{t_0,t_1,\hdots,t_4}[\mu_p]$. En effet, c'est une observation du second auteur quand $p=2$, utilisant le morphisme naturel $\mu_2 \rightarrow G_{\mathbb{Q}_2}^{\rm ab} \rightarrow R(r)^\times$ ; le cas $p=3$ est plus subtil et a \'et\'e r\'ecemment obtenu par G. B\"ockle.}

Le $\Qq$-espace analytique $\mathcal{X}(r)$ jouit d'une propri\'et\'e universelle que nous rappelons \`a pr\'esent. Soit $\mathcal{Y}$ un $\Qq$-affino\"{\i}de et $\rho :  G_{\Qp} \to \GL_2(\OO(\mathcal{Y}))$ une repr\'esentation continue. Pour $y \in \mathcal{Y}$, on note $\rho_y : G_{\Qp} \to \GL_2(K(y))$ l'\'evaluation de $\rho$ en $y$. On note aussi $k_y$ le corps r\'esiduel de $K(y)$, qui est alors muni d'un morphisme naturel $\Fq \to k_y$, ainsi que $\rhob_y :   G_{\Qp} \to \GL_2(k_y)$ la repr\'esentation r\'esiduelle semi-simplifi\'ee de $\rho_y$. On dit que $\rho$ est {\it r\'esiduellement constante et \'egale \`a $r$} si $\rhob_y \simeq r \otimes_{\Fq} k_y$ pour tout $y \in \mathcal{Y}$. Si $\mathcal{Y}$ est connexe, il suffit pour cela que cela soit vrai pour un $y \in \mathcal{Y}$. {\it Les points de $\mathcal{X}(r)$ dans un $\Qq$-affino\"{\i}de $\mathcal{Y}$ sont en bijection canonique avec les classes d'isomorphisme de $\OO(\mathcal{Y})$-repr\'esentations continues $\rho : G_{\Qp} \to \GL_2(\OO(\mathcal{Y}))$ qui sont r\'esiduellement constantes et \'egales \`a $r$}. Cela vaut en particulier pour les points ferm\'es $x \in \mathcal{X}(r)$, qui sont en bijection avec les classes d'isomorphisme de rel\`evements $r_x : G_{\Qp} \to \GL_2(K(x))$ de $r$. Enfin, cette propri\'et\'e universelle appliqu\'ee aux $\Qq$-alg\`ebres locales artiniennes assure que pour tout $x$ dans $\mathcal{X}(r)$, $\widehat{\OO}_{\mathcal{X},x}$ est canoniquement isomorphe \`a la d\'eformation
(pro-)universelle de $r_x$ au sens de Mazur. Comme $r_x \not\simeq r_x(1)$ pout tout $x \in \mathcal{X}(r)$, 
les th\'eor\`emes de Tate montrent bien que $\mathcal{X}(r)$ est r\'egulier de dimension $5$.

\subsection{Points potentiellement triangulins de $\mathcal{X}(r)$} Etant donn\'ee une propri\'et\'e de repr\'esentations, on dira que $x \in \mathcal{X}(r)$ a cette propri\'et\'e si la repr\'esentation asscoi\'ee $r_x$ a cette propri\'et\'e.

\begin{conj}
\label{gcse}
L'ensemble des points potentiellement triangulins est une partie fine de $\mathcal{X}(r)$.
\end{conj}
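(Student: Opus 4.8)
The plan is to deduce the statement from the trichotomy of Theorem~\ref{main} combined with the dimension counts of Section~\ref{pfea}. First I would record the elementary remark that a countable union of fine parts whose dimensions are uniformly bounded by some $d < \dim(\mathcal{X}(r)) = 5$ is again fine: if $\mathcal{A}_n \subset f_n(\mathcal{Y}_n)$ with $\mathcal{Y}_n$ of type d\'enombrable and $\dim(\mathcal{Y}_n) \leq d$, then $\bigsqcup_n \mathcal{Y}_n$ is of type d\'enombrable (countable disjoint unions preserve this property) and of dimension $\leq d$, and the disjoint union of the $f_n$ exhibits $\bigcup_n \mathcal{A}_n$ as a partie fine. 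Now let $x \in \mathcal{X}(r)$ be potentiellement triangulin; then $r_x$ is r\'esiduellement absolument irr\'eductible, hence is not a sum of characters, so Theorem~\ref{main} applied with $E = K(x)$ shows that $r_x$ is either (1) triangulin, (2) une induite, or (3) a twist of a de Rham representation. It therefore suffices to prove that each of these three classes of points is a partie fine of $\mathcal{X}(r)$.

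The two easy classes are (2) and (3). For (2), an irreducible $2$-dimensional induced representation of $G_{\Qp}$ is $\Ind_{G_L}^{G_{\Qp}} \chi$ for one of the finitely many quadratic extensions $L/\Qp$ and a continuous character $\chi$ of $G_L$. The character variety $\operatorname{Hom}_{\mathrm{cont}}(L^\times, \mathbb{G}_{\mathrm{m}})$ is of type d\'enombrable and of dimension $1 + [L:\Qp] = 3$; composing induction with the universal property of $\mathcal{X}(r)$ on the locus where the result is r\'esiduellement $r$ gives an analytic map from a $3$-dimensional space of type d\'enombrable whose image contains all induced points, so class (2) is fine. For (3) I would use only the crude constraint on weights: if $V = D \otimes \eta$ with $D$ de Rham, then the Sen weights of $V$ are $h_1 + s$ and $h_2 + s$ with $h_1, h_2 \in \ZZ$ and $s$ the Sen weight of $\eta$, so the difference $\lambda - \mu$ of the two Sen weights lies in $\ZZ$. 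Writing the Sen polynomial of the d\'eformation universelle as $T^2 - c_1 T + c_2$ with $c_1, c_2 \in \OO(\mathcal{X}(r))$, one has $(\lambda - \mu)^2 = c_1^2 - 4 c_2$, a non-constant analytic function since the Sen weights vary in the family. Hence class (3) is contained in $\bigcup_{n \geq 0} \{ c_1^2 - 4 c_2 = n^2 \}$, a countable union of hypersurfaces propres, each closed in $\mathcal{X}(r)$ and hence of type d\'enombrable and of dimension $4$; so class (3) is fine.

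The main class, and the real content, is (1). Here I would parametrize the split trianguline representations by the character variety $\mathcal{T} = \operatorname{Hom}_{\mathrm{cont}}(\Qp^\times, \mathbb{G}_{\mathrm{m}})$, which is of type d\'enombrable and of dimension $2$: a split trianguline $W(V)$ of rank $2$ corresponds, via the \'equivalence with $(\phi,\Gamma)$-modules over $E \otimes \brig{}{,\Qp}$, to an extension $0 \to R(\delta_1) \to D \to R(\delta_2) \to 0$ of rank-$1$ objects, the $\delta_i$ being points of $\mathcal{T}$. Over $\mathcal{T} \times \mathcal{T}$ one disposes of the universal pair $(R(\delta_1), R(\delta_2))$ and, using the formalism of families of $(\phi,\Gamma)$-modules, of the total space of extensions. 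The isomorphism class of $D$ depends on the extension class only up to scaling, so the relevant parameter is a point of $\mathbb{P}(\operatorname{Ext}^1(R(\delta_2), R(\delta_1)))$; by the formula $\dim \operatorname{Ext}^1 = 1 + \dim H^0 + \dim H^2$ this projective space is a single point generically, and jumps up in dimension only over the sublocus of $\mathcal{T} \times \mathcal{T}$, of codimension $\geq 2$, where $H^0$ or $H^2$ is non-nul. Consequently the space parametrizing (triangulation, projectivized extension) is of type d\'enombrable and of dimension $\leq 4$. Restricting to the lieu \'etale ouvert, Proposition~\ref{etalrep} turns the universal $D$ into a family of Galois representations, whence an analytic map to $\mathcal{X}(r)$ on the r\'esiduellement-$r$ part; a further countable union over the finite coefficient extensions $F/E$ then covers all trianguline points. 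The main obstacle is to make this last construction rigorous: one must set up the family of $(\phi,\Gamma)$-modules and its lieu \'etale, descend it to an analytic family of representations so as to invoke the universal property of $\mathcal{X}(r)$, and check honestly that the scaling quotient and the jumping loci keep the total dimension at $\leq 4 < 5$. Granting this, classes (1), (2) and (3) are all fines, hence so is their union, which proves the statement.
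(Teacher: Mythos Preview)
This statement is labeled a \emph{conjecture} in the paper and is explicitly left unproven: right after stating it, the authors write ``Un point technique nous emp\^eche de d\'emontrer cette conjecture'' and go on to prove only the weaker Theorem~\ref{gchu}, where the Sen polynomial $P_0$ and the value $\lambda_0$ of the determinant at $p$ are fixed. Your outline follows the same trichotomy as the paper's proof of Theorem~\ref{gchu}, and your treatment of classes (2) and (3) is essentially fine for the full $\mathcal{X}(r)$ (for (3) you take a shortcut via the integrality of the Sen-weight difference, which gives a countable union of hypersurfaces of dimension $4<5$; the paper, working inside the smaller $\mathcal{X}_0$, needs sharper bounds and uses Kisin's estimate $\dim\leq 1$ for the de Rham locus instead).

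The genuine gap is precisely the one you flag in class (1). Turning a family of trianguline $(\phi,\Gamma)$-modules over the parameter space into an analytic family of Galois representations, so that the universal property of $\mathcal{X}(r)$ can be invoked, is exactly the ``point technique'' the authors could not overcome. Concretely, Colmez only constructs the local families $\rho^{\mathcal{B}_x}$ over the open part $\mathcal{S}'\subset\mathcal{S}$ where $(\delta_1/\delta_2)(p)\notin p^{\ZZ}$; over $\mathcal{S}\setminus\mathcal{S}'$ no such family was available, and the paper sidesteps this by observing that once $P_0$ and $\lambda_0$ are fixed, the contribution of $\mathcal{S}\setminus\mathcal{S}'$ is either countable or already of type (a). Your ``Granting this'' therefore hides precisely what the paper does not do, and without it the argument does not yield a proof of the conjecture. (The machinery you gesture at --- families of $(\phi,\Gamma)$-modules over the relative Robba ring, openness of the \'etale locus, descent to Galois families --- has since been developed by Kedlaya--Liu, Hellmann and others, and with it the conjecture is now accessible; but this goes beyond the paper.)
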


Un point technique nous emp\^eche de d\'emontrer cette conjecture, mais nous en montrons ci-dessous une variante \`a poids de Hodge-Tate-Sen et d\'eterminant fix\'es. La th\'eorie de Tate-Sen nous fournit un polyn\^ome $P(T) = T^2+aT+b \in \OO(\mathcal{X}(r))[T]$ tel que pour tout $x \in \mathcal{X}(r)$ l'\'evaluation (des coefficients) de $P(T)$ en $x$ est le polyn\^ome de Sen de $r_x$. 
On dispose de plus d'une fonction $\lambda \in R(r)^\times$ qui est l'\'evaluation en $p \in \Qp^\times=G_{\Qp}^{\rm ab}$ 
du d\'eterminant de la repr\'esentation universelle $G_{\Qp} \to \GL_2(R(r))$. 

Fixons une extension finie $E$ de $\Qq$ ainsi que $P_0 \in E[T]$ unitaire de degr\'e $2$ et
$\lambda_0 \in \OO_E^\times$, et consid\'erons $\mathcal{X}_0 \subset \mathcal{X}(r) \times_{\Qq} E$ le ferm\'e 
d\'efini par les \'equations $P=P_0$ et $\lambda=\lambda_0$. C'est un espace rigide sur $E$ dont chaque composante irr\'eductible est de dimension $\geq 2$ par le \emph{hauptidealsatz} de Krull.\footnote{Un argument de torsion permet de voir que ces composantes irr\'eductibles sont de dimension $3$ au plus. Il est probable qu'elles soient toutes de dimension exactement $2$, mais ceci est inutile pour la suite.} 

\begin{theo} 
\label{gchu} Pour tout $(P_0,\lambda_0)$, l'ensemble des points potentiellement triangulins de chacune des composantes irr\'eductibles de $\mathcal{X}_0$ en est une partie fine.
\end{theo}

Ce th\'eor\`eme entra\^ine le Th\'eor\`eme $C$ de l'introduction par la proposition \ref{gcci}, ainsi donc que le th\'eor\`eme $B$. 

Soit $V$ une $E$-repr\'esentation trianguline de dimension $2$ qui est absolument
irr\'eductible et $D_{\rm rig}(V)$ le $(\phi,\Gamma)$-module \'etale sur l'anneau de Robba associ\'e \`a $V$. Soient $F$ une extension finie de $E$, ainsi que des
caract\`eres  $\delta_i : \Qp^\times \to F^\times$ pour $i=1,2$,  tels que
$D_{\rm rig}(V)\otimes_E F$ soit une
extension de $(\brig{}{,\Qp} \otimes F)(\delta_2)$ par $(\brig{}{,\Qp}
\otimes
F)(\delta_1)$. On note $x : \Qp^\times \rightarrow F^\times$ l'inclusion et on
pose $\chi=x|x|$ (c'est le caract\`ere cyclotomique). Rappelons que si $\delta_1\delta_2^{-1}\in \chi
x^{\mathbf{N}}$ alors quitte \`a tordre la repr\'esentation $V$ par un caract\`ere, soit $V$ est 
semistable non cristalline, soit $V$ est cristalline telle que le quotient des deux
valeurs propres de son frobenius cristallin est $p^{\pm 1}$. 

\begin{lemm} Soit $\OO$ la d\'eformation pro-universelle de $V$ aux
$E$-alg\`ebres artiniennes de corps r\'esiduel $E$, param\'etrant les d\'eformations de polyn\^ome de Sen et d\'eterminant
constants. Si $\delta_1\delta_2^{-1} \notin \chi
x^{\mathbf{N}}$, alors $\OO \simeq E\dcroc{X,Y}$.
\end{lemm}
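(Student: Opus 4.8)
The object $\OO$ is, by Mazur's theory, the pro-universal deformation ring of $V$ for deformations with constant determinant and constant Sen polynomial, and I will prove it is regular of dimension $2$. Since $V$ is absolutely irreducible and the cyclotomic character satisfies $\chi^2\neq 1$, every $G_{\Qp}$-equivariant map $V\to V(1)$ vanishes (such a map would be an isomorphism, forcing $\chi^2=1$), so $H^0(G_{\Qp},\mathrm{ad}^0 V)=0$ and, by local Tate duality and the self-duality of $\mathrm{ad}^0 V$, also $H^2(G_{\Qp},\mathrm{ad}^0 V)=0$; the Euler characteristic formula then gives $\dim_E H^1(G_{\Qp},\mathrm{ad}^0 V)=3$. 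Hence the deformation ring $R^{\det}$ of $V$ with constant determinant alone is formally smooth of dimension $3$, i.e.\ $R^{\det}\simeq E\dcroc{X_1,X_2,X_3}$. As the determinant is fixed on $R^{\det}$, so is its Sen weight, which is the first coefficient $a$ of the Sen polynomial $T^2+aT+b$; thus imposing the full Sen polynomial amounts to the single equation $b=b_0$, and $\OO\simeq R^{\det}/(b-b_0)$.

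The lemma is therefore equivalent to $b-b_0\notin\mathfrak{m}^2$, i.e.\ to the nonvanishing of the differential $db$ as a linear form on $t_{R^{\det}}=H^1(G_{\Qp},\mathrm{ad}^0 V)$: for then $R^{\det}/(b-b_0)$ is a regular hypersurface of dimension $2$, hence isomorphic to $E\dcroc{X,Y}$ by Cohen's structure theorem, the lower bound $\dim\OO\geq 2$ being already guaranteed by Krull's \emph{hauptidealsatz}. So everything comes down to exhibiting one constant-determinant deformation of $V$ along which the product $b$ of the two Sen weights moves to first order, and since $b$ is defined over $E$ it suffices to do this after extending scalars to $F$, where the triangulation lives.

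To produce such a deformation I would deform the triangulation itself. Writing $\mathcal{R}=\brig{}{,\Qp}\otimes F$ and $w_1,w_2$ for the Sen weights of $\mathcal{R}(\delta_1),\mathcal{R}(\delta_2)$, I consider over $F[\epsilon]$ the extension of $\mathcal{R}(\delta_2 x^{-\epsilon})$ by $\mathcal{R}(\delta_1 x^{\epsilon})$: this preserves the determinant $\delta_1\delta_2$ and moves the weights to $w_1+\epsilon$ and $w_2-\epsilon$. The one thing to check is that the given extension class lifts along this twist; the obstruction sits in $\mathrm{Ext}^2(\mathcal{R}(\delta_2),\mathcal{R}(\delta_1))=H^2_{(\phi,\Gamma)}(\mathcal{R}(\delta_1\delta_2^{-1}))$, and this is exactly the group which vanishes under the hypothesis $\delta_1\delta_2^{-1}\notin\chi x^{\mathbf{N}}$, by the computation of the cohomology of rank one $(\phi,\Gamma)$-modules in \cite{KN}. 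The resulting étale deformation has Sen polynomial $(T-w_1-\epsilon)(T-w_2+\epsilon)$, so $db=(w_2-w_1)\,d\epsilon$, which is nonzero as soon as $w_1\neq w_2$. This is where I expect the hypothesis to be genuinely used.

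The main obstacle is the case $w_1=w_2$, in which this deformation only changes $b$ to second order. Here I would argue that, $V$ being absolutely irreducible and trianguline, the saturated triangulating sub-object $\mathcal{R}(\delta_1)$ is not étale (else it would be a genuine sub-representation and $V$ would be reducible), so by Théorème \ref{slopefil} its slope is nonzero; this is incompatible with $V$ being potentially unramified, since a potentially unramified representation becomes isoclinic of slope zero after restriction and its saturated sub-lines are then all étale. Consequently the Sen operator $\nabla$ on $\dsen(W(V))$ cannot be scalar and is a nontrivial Jordan block. A deformation in the opposite extension direction $H^1_{(\phi,\Gamma)}(\mathcal{R}(\delta_2\delta_1^{-1}))$ then alters the off-diagonal entry of $\nabla$ and, combined with the nonzero nilpotent part already present, changes $\det\nabla=b$ to first order. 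In all cases $db\neq 0$, whence $\OO\simeq E\dcroc{X,Y}$.
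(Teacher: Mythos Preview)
Your reduction is a tidy variant of the paper's: instead of cutting the full $5$-dimensional ring $\OO'$ by the three equations $a-a_0$, $b-b_0$, $\lambda-\lambda_0$ and checking their linear independence in $\MM/\MM^2$, you first pass to the constant-determinant ring $R^{\det}\simeq E\dcroc{X_1,X_2,X_3}$, observe that $a$ is then automatically constant, and reduce to the single condition $db\neq 0$ on $H^1(G_{\Qp},\mathrm{ad}^0 V)$. (A minor wording issue: the vanishing of $\mathrm{Hom}_{G_{\Qp}}(V,V(1))$ is what gives $H^2(\mathrm{ad}^0 V)=0$ via Tate duality; $H^0(\mathrm{ad}^0 V)=0$ is just Schur's lemma.) Both approaches then come down to producing deformations that move the Sen data to first order. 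The paper does not construct these by hand: it invokes \cite[Prop.~2.3.10(ii)]{BCH}, which asserts that under the hypothesis $\delta_1\delta_2^{-1}\notin\chi x^{\mathbf N}$ one can realise an \emph{arbitrary} first-order variation of $(a,b,\lambda)$ by a trianguline deformation over $F[\varepsilon]$.

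Your explicit deformation $\delta_1\mapsto\delta_1 x^{\varepsilon}$, $\delta_2\mapsto\delta_2 x^{-\varepsilon}$ is correct when $w_1\neq w_2$: the lifting obstruction sits in $H^2(\mathcal R(\delta_1\delta_2^{-1}))$, which vanishes unless $\delta_1\delta_2^{-1}=\chi$, and one computes $db=(w_2-w_1)\,d\varepsilon$. The gap is in the equal-weight case. Your argument that $\nabla$ is then a nontrivial Jordan block is essentially right (a twist of $V$ with $\nabla=0$ would be $\Cp$-admissible, hence potentially unramified by Sen, and over such an $L$ every saturated rank-one sub-$(\phi,\Gamma_L)$-module of $D_{\mathrm{rig}}(V|_{G_L})$ is indeed \'etale, contradicting the positive slope of $\mathcal R(\delta_1)$). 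But the final step is not justified: $\mathcal R(\delta_2\delta_1^{-1})$ is the strictly-lower-triangular \emph{quotient} of $\mathrm{ad}(D)$, not a subobject, so a class in $H^1(\mathcal R(\delta_2\delta_1^{-1}))$ does not by itself define a deformation of $D$; and even if you lift it to $H^1(\mathrm{ad}^0 D)$, you have given no reason why the induced first-order change of $\nabla$ has nonzero $(2,1)$-entry. Note that among trianguline deformations one always has $db=-w\,da$ to first order when $w_1=w_2=w$, so the missing deformation cannot be trianguline for the \emph{same} ordered pair $(\delta_1,\delta_2)$; one really needs a separate argument here, which is precisely what the cited result from \cite{BCH} supplies.
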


\begin{proof} Si $\OO'$ est la d\'eformation pro-universelle de $V$ aux
$E$-alg\`ebres artiniennes de corps r\'esiduel $E$, alors on a 
d\'ej\`a vu que $\OO' \simeq E\dcroc{X_1,\dots,X_5}$. Soient $T^2+aT+b \in \OO[T]$ le polyn\^ome de Sen universel, $\lambda \in \OO^\times$ 
la valeur en $p$ du d\'eterminant de la d\'eformation universelle, et $(T^2+a_0T+b_0, \lambda_0)\in E[T] \times E^\times$ leurs 
\'evaluations en $0$. Par d\'efinition,
$$\OO=\OO'/(a-a_0,b-b_0,\lambda-\lambda_0).$$
D'apr\`es un r\'esultat classique sur les anneaux locaux r\'eguliers, il suffit de voir que les images 
de $a-a_0$, $b-b_0$ et $\lambda-\lambda_0$ sont lin\'eairement ind\'ependantes sur $E$ dans $\MM/\MM^2$ o\`u $\MM$ est l'id\'eal 
maximal de $\OO'$. Il suffit de le v\'erifier apr\`es extension des scalaires \`a
$F$, et donc de voir qu'il existe des d\'eformations $\widetilde{V}$ de $V
\otimes_E F$ \`a 
$F[\varepsilon]/(\varepsilon)^2$ telles que $(a(\widetilde{V})-a_0,b(\widetilde{V})-b_0,
\lambda(\widetilde{V})-\lambda_0)$ soit quelconque dans $(\varepsilon F)^3$.
Par l'hypoth\`ese sur $\delta_1\delta_2^{-1}$, cela r\'esulte de 
\cite[Prop. 2.3.10 (ii)]{BCH}, qui montre que l'on peut m\^eme choisir $\widetilde{V}$ trianguline sur 
$F[\varepsilon]/(\varepsilon^2)$ au sens de {\it loc.cit.}
\end{proof}

Ainsi, si la repr\'esentation $R$ de l'\'enonc\'e du th\'eor\`eme $C$
satisfait les hypoth\`eses du lemme ci-dessus, alors le point correspondant \`a $R$ dans
l'espace $\mathcal{X}_0 \subset \mathcal{X}(\overline{R})$ appropri\'e est
un point r\'egulier. Dans ce cas, on peut donc prendre $F=E$ dans l'\'enonc\'e
de ce th\'eor\`eme, d'apr\`es la proposition~\ref{gcci}. La pr\'ecision suivant l'\'enonc\'e du th\'eor\`eme $C$ de l'introduction 
s'en d\'eduit. Cela d\'emontre en particulier qu'il existe des
repr\'esentations non potentiellement triangulines qui sont \`a coefficients
dans $\Qp$.

\subsection{Preuve du th\'eor\`eme \ref{gchu}} Le reste du chapitre est consacr\'e \`a la d\'emonstration du th\'eor\`eme~\ref{gchu}. D'apr\`es le th\'eor\`eme A de l'introduction, il y a trois types (non exclusifs) de repr\'esentations potentiellement triangulines : 
\begin{itemize}
\item[(a)] les repr\'esentations de de Rham tordues par un caract\`ere;
\item[(b)] les induites d'un caract\`ere d'une extension quadratique de $\Qp$; 
\item[(c)] les repr\'esentations triangulines (d\'eploy\'ees).
\end{itemize}

Ces repr\'esentations vivent dans des familles analytiques naturelles que nous 
d\'ecrivons \`a pr\'esent. Consid\'erons tout d'abord le cas (b), qui est le plus 
simple. Il ne serait pas difficile d'expliciter la famille universelle (de
dimension $3$) form\'ee de toutes les repr\'esentations de type (b). C'est
cependant inutile pour l'application au th\'eor\`eme \ref{gchu} car \`a d\'eterminant et polyn\^ome de Sen
fix\'es, 
il n'y a qu'un nombre 
d\'enombrable de telles repr\'esentations. 

En effet, il n'y a d'une
part qu'un nombre fini d'extensions quadratiques
$K$ de $\Qp$ ($3$ si $p \geq 3$ et $7$ si $p=2$). D'autre part, fixons $K$ une extension finie de $\Qp$
et notons $\Sigma$ l'ensemble des $[K:\Qp]$ plongements de $K$ dans
$\Qpbar$. Si $\eta : G^{\rm ab}_K=\widehat{K^\times} \to
\Qpbar^\times$ est un caract\`ere continu, il existe des \'el\'ements uniques $a_\sigma \in
\Qpbar$, 
$\sigma \in \Sigma$, tels que  $\eta(x)=\prod_{\sigma \in
\Sigma}\sigma(x)^{a_\sigma}$ pour tout $x$ dans un sous-groupe ouvert assez
petit de $\OO_K^\times$. En particulier, le caract\`ere $\eta$ est
d'ordre fini si, et seulement si, $a_\sigma=0$ pour tout $\sigma \in
\Sigma$ ; la donn\'ee des $a_\sigma$ d\'etermine donc $\eta$ \`a
multiplication pr\`es par un caract\`ere d'ordre fini (et en particulier, un ensemble
d\'enombrable de caract\`eres). On conclut car le polyn\^ome de Sen
de ${\rm Ind}_{G_K}^{G_{\Qp}} \eta$ est exactement $\prod_{\sigma \in
\Sigma}(T-a_\sigma)$. 

Int\'eressons nous maintenant au cas (c). Colmez a d\'efini dans \cite{CTR} l'espace $\mathcal{S}$ des repr\'esen\-tations 
triangulines (nous nous limitons ici aux repr\'esentations irr\'eductibles). Par
cons\-truction, c'est un espace analytique 
sur $\Qp$ de type d\'enombrable, \'equi-dimensionnel de dimension $4$, et muni d'un
morphisme naturel vers l'espace 
$\mathcal{D}$ des caract\`eres $p$-adiques de $(\Qp^\times)^2$ (une
r\'eunion disjointe finie de copies de $\mathbb{G}_m^2 \times \mathcal{W}^2$
o\`u $\mathcal{W}$ est la boule unit\'e ouverte de dimension $1$ sur $\Qp$). L'espace d\'efini par Colmez est construit 
de mani\`ere ad-hoc de sorte que ses points ferm\'es param\`etrent les repr\'esentations triangulines. Contrairement \`a 
ce que l'on pourrait penser, il n'existe pas de famille analytique de repr\'esentations galoisiennes sur $\mathcal{S}$ qui 
se sp\'ecialise en tout point sur la repr\'esentation param\'etr\'ee par ce point ; une premi\`ere obstruction vient de ce 
que la repr\'esentation r\'esiduelle associ\'ee n'est pas constante sur $\mathcal{S}$.  Cependant, Colmez d\'emontre une 
forme faible de ce type d'\'enonc\'e qui est suffisante pour notre application (mais pas tout \`a fait pour la conjecture 
\ref{gcse}). Si $x \in \mathcal{S}$ est un point ferm\'e, il lui est associ\'e un point $\delta(x) \in \mathcal{D}$, c'est 
\`a dire une paire de caract\`eres continus $\delta_{i,x} : \Qp^\times \to K(x)^\times$ pour $i=1$, $2$.
Par construction, le $D_{\rm rig}$ de la $K(x)$-repr\'esentation $V_x$ associ\'ee \`a $x$
est une extension non triviale de $(\brig{}{,\Qp} \otimes K(x))(\delta_{2,x})$
par $(\brig{}{,\Qp} \otimes K(x))(\delta_{1,x})$. On note $$\mathcal{S}' \subset \mathcal{S}$$ l'ouvert admissible de 
$\mathcal{S}$ d\'efini par la condition $(\delta_{1,x}/\delta_{2,x})(p) \notin p^\ZZ$. L'application naturelle 
$\mathcal{S}' \to \mathcal{D}$ est alors une immersion ouverte, et d'apr\`es \cite[Prop. 5.2]{CTR}, pour tout 
$x \in \mathcal{S}'$ il existe un voisinage affino\"ide $\mathcal{B}_x$ de $x$
dans $\mathcal{S}'$ qui est une
$K(x)$-boule, ainsi qu'une 
repr\'esentation continue $\rho^{\mathcal{B}_x} : G_{\Qp} \to \GL_2(\OO(\mathcal{B}_x))$, dont l'\'evaluation en chaque 
$y\in \mathcal{B}_x$ est isomorphe \`a $V_y$. Comme $\mathcal{S}'$ est de type d\'enombrable, car $\mathcal{D}$ l'est, 
on peut extraire du recouvrement des $\mathcal{B}_x$ un recouvrement d\'enombrable $\{ \mathcal{B}_x \}_{x \in I}$ avec 
$I \simeq \ZZ_{\geq 0}$. On pose alors $$\mathcal{S}'_{\rm dec} = \coprod_{x \in I} \mathcal{B}_x.$$ Il s'agit d'une 
``d\'econnexion'' non canonique de $\mathcal{S}'$. On a par ailleurs une immersion ouverte surjective \'evidente 
$\mathcal{S}'_{\rm dec} \to \mathcal{S}'$ ainsi qu'une repr\'esentation galoisienne naturelle 
$\rho^{\mathcal{S}'_{\rm dec}} : G_{\Qp} \to \GL_2(\OO(\mathcal{S}'_{\rm dec}))$
obtenue \`a partir des $\rho^{\mathcal{B}_x}$ pour $x \in I$. Notons enfin que
via l'inclusion $\mathcal{S}' \subset \mathcal{D}$, l'op\'eration consistant \`a fixer le polyn\^ome de Sen et le d\'eterminant en $p$ est encore parfaitement transparente, et que les lieux obtenus sont de type d\'enombrable et \'equi-dimensionnels de dimension $4-3=1$.

Il nous faut maintenant comprendre $\mathcal{S}\backslash \mathcal{S}'$. Remarquons que lorsque le polyn\^ome de Sen d'une 
repr\'esentation trianguline $x \in \mathcal{S}$ est donn\'e, on conna\^{\i}t les deux caract\`eres 
${\delta_{1,x}}_{|\Zp^\times}$ et ${\delta_{2,x}}_{|\Zp^\times}$  \`a des caract\`eres d'ordre fini pr\`es. Si de plus on 
ne s'interesse qu'\`a des repr\'esentations dans $\mathcal{S}\backslash \mathcal{S}'$ dont le d\'eterminant en $p$ est 
aussi donn\'e, cela d\'etermine un nombre d\'enombrable de $\delta_{i,x}(p)$ possibles, et donc de paires de 
$\delta_{i,x}$ possibles. Pour chacune de ces paires, il y a en fait une et une seule repr\'esentation trianguline 
associ\'ee dans $\mathcal{S}\backslash \mathcal{S}'$, \`a moins que
$\delta_{1,x}/\delta_{2,x}$ ne soit de la forme $z \mapsto z^i|z|$ pour un entier $i\geq 1$, 
d'apr\`es \cite[Thm. 2.9]{CTR}.\footnote{Notons que l'autre cas a priori exceptionnel,
o\`u $\delta_{1,x}/\delta_{2,x}$ est de la forme $z \mapsto z^{-i}$ pour $i\geq 0$, ne se produit pas pour 
$x \in \mathcal{S}$, car il ne correspond pas \`a un $(\varphi,\Gamma_{\Qp})$-module \'etale si $i\neq 0$, et 
qu'il est r\'eductible si $i=0$.} Mais dans ce cas, les repr\'esentations possibles sont des torsions par un 
caract\`ere de repr\'esentations semi-stables, et sont donc du type (a). 

Terminons enfin par le type (a). Le lieu de de Rham de $\mathcal{X}(r)$, s'il est non vide, est un ferm\'e 
analytique (cf \cite{LB11}). En particulier, il est de type d\'enombrable. Sa dimension a \'et\'e calcul\'ee par 
Kisin dans \cite[Thm. 3.3.8 ]{MK} : elle est toujours $\leq 1$. Par torsion, on en d\'eduit que pour tout caract\`ere 
$\eta : G_{\Qp} \to F^\times$ ($F$ \'etant une extension finie de $E$), 
le lieu des $x \in \mathcal{X}(r) \times_E F$ tels que $r_x \otimes \eta$
soit de de Rham est encore un ferm\'e analytique de dimension $\leq 1$. 

Pour r\'ecapituler, nous avons d\'emontr\'e le r\'esultat suivant:

\begin{lemm} Il existe un $E$-espace analytique $\mathcal{Y}$, ainsi qu'une repr\'esentation continue $\rho : G_{\Qp} \rightarrow \OO(\mathcal{Y})^\times$, tels que:
\begin{itemize}
\item $\mathcal{Y}$ est de type d\'enombrable et de dimension $\leq 1$,
\item le polyn\^ome de Sen de $\rho$ est constant \'egal \`a $P_0$, et son d\'eterminant en $p$ est constant \'egal \`a $\lambda_0$,
\item pour tout point $x \in \mathcal{X}_0$ potentiellement triangulin, il existe $y \in \mathcal{Y}$, et des plongements de $K(x)$ 
et $K(y)$ dans  $\Qpbar$, tels que $\rho_y \otimes_{K(y)} \Qpbar \simeq r_x \otimes_{K(x)} \Qpbar$,
\item pour tout $y \in \mathcal{Y}$, la repr\'esentation $\rho_y$ est potentiellement trianguline.
\end{itemize}
\end{lemm}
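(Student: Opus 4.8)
L'id\'ee est de prendre pour $\mathcal{Y}$ la r\'eunion disjointe de trois espaces $\mathcal{Y}_a$, $\mathcal{Y}_b$ et $\mathcal{Y}_c$, associ\'es aux trois types (a), (b) et (c) de repr\'esentations potentiellement triangulines fournis par le th\'eor\`eme A de l'introduction, \`a polyn\^ome de Sen \'egal \`a $P_0$ et d\'eterminant en $p$ \'egal \`a $\lambda_0$. Puisque tout point potentiellement triangulin de $\mathcal{X}_0$ est d'au moins l'un de ces trois types, il suffira que chaque $\mathcal{Y}_i$ soit de type d\'enombrable et de dimension $\leq 1$, porte une repr\'esentation $\rho_i$ dont toutes les sp\'ecialisations sont potentiellement triangulines et de polyn\^ome de Sen et d\'eterminant en $p$ constants \'egaux \`a $P_0$ et $\lambda_0$, et que les sp\'ecialisations $\rho_{i,y}$ redonnent \`a $\Qpbar$-isomorphisme pr\`es tous les $r_x$ du type correspondant (c'est-\`a-dire $\rho_{i,y} \otimes_{K(y)} \Qpbar \simeq r_x \otimes_{K(x)} \Qpbar$ pour un $y$ convenable).

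Pour le type (b), on a d\'ej\`a observ\'e qu'\`a polyn\^ome de Sen et d\'eterminant en $p$ fix\'es il n'existe qu'un nombre d\'enombrable d'induites ; on prend donc pour $\mathcal{Y}_b$ la r\'eunion disjointe d\'enombrable des points correspondants, de dimension $0$, munie de la repr\'esentation \'evidente. Pour le type (c), on part de l'espace $\mathcal{S}'_{\rm dec}$ et de sa repr\'esentation $\rho^{\mathcal{S}'_{\rm dec}}$ : via l'inclusion $\mathcal{S}' \subset \mathcal{D}$, fixer $P_0$ et $\lambda_0$ y d\'ecoupe un ferm\'e de type d\'enombrable \'equi-dimensionnel de dimension $4-3=1$, auquel on adjoint, pour couvrir $\mathcal{S} \setminus \mathcal{S}'$, la r\'eunion disjointe d\'enombrable des uniques repr\'esentations triangulines associ\'ees aux paires $(\delta_1,\delta_2)$ compatibles avec $P_0$ et $\lambda_0$, les cas exceptionnels $\delta_1/\delta_2 : z \mapsto z^i|z|$ \'etant renvoy\'es au type (a). Pour ce dernier, la majoration $\leq 1$ de la dimension du lieu de de Rham due \`a Kisin, jointe au fait qu'il n'y a qu'un nombre d\'enombrable de caract\`eres $\eta$ compatibles avec $P_0$ et $\lambda_0$ \`a torsion d'ordre fini pr\`es, fournit un ferm\'e analytique $\mathcal{Y}_a$ de type d\'enombrable et de dimension $\leq 1$.

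Il restera \`a poser $\mathcal{Y} = \mathcal{Y}_a \coprod \mathcal{Y}_b \coprod \mathcal{Y}_c$ et \`a recoller les $\rho_i$ en une repr\'esentation $\rho$. L'espace $\mathcal{Y}$ est de type d\'enombrable et de dimension $\leq 1$ comme r\'eunion disjointe d\'enombrable d'espaces ayant ces propri\'et\'es ; le polyn\^ome de Sen et le d\'eterminant en $p$ de $\rho$ valent $P_0$ et $\lambda_0$ par construction ; chaque $\rho_y$ est potentiellement trianguline puisque c'est le cas sur chacun des trois morceaux ; et la propri\'et\'e de recouvrement r\'esulte du th\'eor\`eme A et de l'analyse de cas ci-dessus.

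Le point le plus d\'elicat sera le type (c) : il faut s'assurer que les sp\'ecialisations de $\rho^{\mathcal{S}'_{\rm dec}}$ redonnent bien, \`a $\Qpbar$-isomorphisme pr\`es, toutes les repr\'esentations triangulines irr\'eductibles $V_x$ voulues, contr\^oler le lieu $\mathcal{S} \setminus \mathcal{S}'$ et l'absorption de ses cas exceptionnels dans le type (a), et v\'erifier que {\og fixer $P_0$ et $\lambda_0$ \fg} abaisse effectivement la dimension de $3$ \`a $1$. La souplesse autoris\'ee par l'\'enonc\'e ({\og \`a $\Qpbar$-isomorphisme pr\`es \fg}) est pr\'ecis\'ement ce qui permet de g\'erer les corps de coefficients $E$, $F$ et $K(x)$.
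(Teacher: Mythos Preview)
Your proposal is correct and follows essentially the same approach as the paper: you decompose $\mathcal{Y}$ into three pieces corresponding to types (a), (b), (c) of Theorem~A, handle (b) by the countability of induced representations with fixed Sen polynomial, handle (c) via $\mathcal{S}'_{\rm dec}$ cut down to dimension~$1$ plus a countable set from $\mathcal{S}\setminus\mathcal{S}'$ (absorbing the exceptional $\delta_1/\delta_2=\chi x^i$ cases into type~(a)), and handle (a) via Kisin's bound on the de Rham locus together with the countability of the relevant twisting characters. This is exactly the paper's construction; the only difference is presentational, as the paper develops the three cases in the discussion \emph{preceding} the lemma and states the lemma as a recapitulation, whereas you place the analysis inside the proof itself. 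One minor stylistic remark: your use of the future tense (``Il restera \`a poser'', ``Le point le plus d\'elicat sera'', ``il faut s'assurer'') makes the last two paragraphs read like a plan rather than an argument already carried out; since you have in fact already given all the ingredients, you could simply recast these in the indicative.
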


En effet, on peut prendre pour $\mathcal{Y}$ la r\'eunion disjointe de : l'ensemble d\'enombrable des points de type (b), 
l'ensemble d\'enombrable des points de type (c) dans $\mathcal{S}\backslash \mathcal{S}'$
qui ne sont pas de type (a), l'espace 
$\mathcal{S}'_{\rm dec}$, et pour chaque $\eta : G_{\Qp} \to F^\times$ dans un ensemble d\'enombrable,
du lieu de de Rham de $\mathcal{X}(r) \times_E F$.

Le th\'eor\`eme \ref{gchu} est maintenant imm\'ediat : soit $\mathcal{Y}(r) \subset \mathcal{Y}$ l'ouvert ferm\'e sur 
lequel la repr\'esentation $\rho$ ci-dessus est r\'esiduellement constante et isomorphe \`a $r$. Par la propri\'et\'e 
universelle de $\mathcal{X}(r)$, et donc de $\mathcal{X}_0$, la repr\'esentation $\rho$ correspond \`a un morphisme 
analytique $f : \mathcal{Y}(r) \to \mathcal{X}_0$. De plus, $f(\mathcal{Y}(r))$ est exactement l'ensemble des points 
potentiellement triangulins de $\mathcal{X}_0$. Il vient que
$f(\mathcal{Y}(r))$ est fine dans $\mathcal{X}_0$ car chaque composante irr\'eductible de $\mathcal{X}_0$ est 
de dimension $\geq 2$.  

\bibliographystyle{smfalpha}
\bibliography{potrig}
\end{document}